\definecolor{LinkColor}{RGB}{0,0,255}               
    \pgfplotsset{width=\linewidth, height=\linewidth, xlabel={Mesh size $h$}, legend cell align=left, cycle list name=mark list}
    \pgfplotsset{compat=1.11}
    \pgfplotsset{legend style={at={(.99,0.1)}, anchor=south east, legend columns=1, draw=none, fill=none},}
\definecolor{LinkColorBlack}{RGB}{0,0,0}
\numberwithin{equation}{section}    
\def\BState{\State\hskip-\ALG@thistlm}
\renewcommand{\b}[1]{\boldsymbol{\mathrm{#1}}} 		
\newcommand{\bn}{\ensuremath{\b n}\xspace}
\newcommand{\bu}{\ensuremath{\b u}\xspace}
\newcommand{\bx}{\ensuremath{\b x}\xspace}
\newcommand{\R}{\ensuremath{\mathbb{R}}\xspace}
\newcommand{\N}{\ensuremath{\mathbb{N}}\xspace}
\newcommand{\cE}{\ensuremath{\mathcal{E}}\xspace}
\newcommand{\cF}{\ensuremath{\mathcal{F}}\xspace}
\newcommand{\cN}{\ensuremath{\mathcal{N}}\xspace}
\newcommand{\cO}{\ensuremath{\mathcal{O}}\xspace}
\newcommand{\cT}{\ensuremath{\mathcal{T}}\xspace}
\newcommand{\tT}{\ensuremath{\tilde{T}}\xspace}
\newcommand{\dx}{\,\mathrm{d}\bx}
\newcommand{\ds}{\,\mathrm{d}s}
\newcommand{\phih}{\ensuremath{\phi_h}\xspace}
\newcommand{\tphih}{\ensuremath{\tphi_h}\xspace}
\newcommand{\psih}{\ensuremath{\psi_h}\xspace}
\newcommand{\tphi}{\ensuremath{\tilde{\phi}}\xspace}
\newcommandx{\NarrowBand}[2][1=\G, 2=]{\ensuremath{\Omega^{#2}_{#1}}\xspace}
\newcommand{\G}{\ensuremath{\Gamma}\xspace}
\newcommand{\Gh}{\ensuremath{\Gamma_h}\xspace}
\newcommand{\Gt}{\ensuremath{\Gamma(t)}\xspace}
\newcommandx{\BDF}[2][1=]{ {\widetilde{\partial}}^{#1}_t {#2} }         
\renewcommand{\emptyset}{\text{\usefont{OMS}{cmsy}{m}{n}\symbol{59}}}
\renewenvironment{math}[1][RCLRCLRCL]{\begin{IEEEeqnarray}{#1}}
{\end{IEEEeqnarray}\ignorespacesafterend}
\newenvironment{math*}[1][RCLRCLRCL]{\begin{IEEEeqnarray*}{#1}}
{\end{IEEEeqnarray*}\ignorespacesafterend}
\newenvironment{mathe*}[1][RCLRCLRCL]{\begin{IEEEeqnarray*}{#1}}
{\end{IEEEeqnarray*}\ignorespacesafterend}
\newenvironment{split2}[1][RCLRCLRCL]
{\begin{aligned}\begin{IEEEeqnarraybox}{#1}}
{\end{IEEEeqnarraybox}\end{aligned}\ignorespacesafterend}
\patchcmd{\@IEEEeqnarray}{\relax}{\relax\intertext@}{}{}
\newtheorem{remark}{Remark}[section]
\newtheorem{assumption}{Assumption}[section]
\begin{document}


\title{A narrow band finite element method for the  level set equation}

\author{ Maxim A. Olshanskii \thanks{Department of Mathematics, University of Houston, Houston, Texas 77204-3008, USA, (maolshanskiy@uh.edu), www.math.uh.edu/\string~molshan} \and Arnold Reusken \thanks{Institute for Geometry and Applied Mathematics, RWTH-Aachen University, D-52056 Aachen, Germany (reusken@igpm.rwth-aachen.de)} \and Paul Schwering \thanks{Institute for Geometry and Applied Mathematics, RWTH-Aachen University, D-52056 Aachen, Germany (schwering@igpm.rwth-aachen.de)}}

\maketitle

\begin{abstract}
A finite element method is introduced to track interface evolution governed by the level set equation. The method solves for the level set indicator function in a narrow band around the interface. An extension procedure, which  is essential for a narrow band level set method,  is introduced based on a finite element $L^2$- or $H^1$-projection combined with the ghost-penalty method. This procedure is formulated as a linear variational problem in a narrow band around the surface, making it computationally efficient and suitable for rigorous error analysis. The extension method is combined with a discontinuous Galerkin space discretization and a BDF time-stepping scheme. The paper analyzes the stability and accuracy of the extension procedure and evaluates the performance of the resulting narrow band finite element method for the level set equation through numerical experiments.
\end{abstract}

\section{Introduction}\label{sec_transport_introduction}
The level set method is a widely used numerical method for the representation and approximation of moving surfaces (or interfaces) \cite{osher2004level,Sethian1996}. The method is based on two central embeddings. The first one is the embedding of the surface as the zero level of a higher-dimensional scalar level set function, denoted by $\phi$, and the second one is the embedding, or extension, of the surface's velocity to a velocity field that transports this level set function.
For a closed hypersurface $\Gamma$: $[0,T] \to \mathbb{R}^d$ moving with speed $V_\Gamma$ in its normal direction, we consider the following \emph{Eulerian} formulation of its evolution in a domain $\Omega \subset \mathbb{R}^d$ such that the surface is contained in $\Omega$ for all times $t \in [0,T]$:
\begin{equation} \label{LS3}
	\dfrac{\partial \phi}{\partial t} + \bu \cdot \nabla \phi = 0,
\end{equation}
where $\bu:\,\Omega\to\R^d$ and the surface normal velocity are related by $\bu\cdot \bn=V_\Gamma$ on the zero level set of $\phi$. Here $\bn$ denotes the unit normal on $\Gamma(t)$.
The equation is supplemented with the initial condition $\phi(\cdot, 0) = \phi_0(\cdot)$ such that $\phi_0(x)=0$ if and only if $x \in \Gamma(0)$.

 In certain classes of applications, a canonical velocity field $\bu$ is given in $\Omega$ through the physics that drives the evolution of $\Gamma(t)$. A prototype example from this class is a two-phase immiscible flow problem where the evolution of the interface is driven by the continuous flow field of the two fluid phases \cite{Sussman2014,GrossReusken2011}. In other problem classes, the surface normal velocity is known \emph{only} at the surface and extension velocity $\bu$ has to be constructed.

Our interest in the topic of this paper stems from the modeling of fluid deformable surfaces \cite{hu2007continuum,jankuhn2018incompressible,torres2019modelling,nitschke2019hydrodynamic}. The governing partial differential equations in such models are posed on the evolving surface $\Gamma(t)$, and the surface geometry evolution may be part of the unknown solution. An Eulerian numerical framework for this and many other interface problems, such as geometric flows, commonly utilizes a level set method for the implicit representation of the surface, while the velocity needed in the level set equation results from the surface quantities.
In such a setting, the velocity field is available only on the surface or in a ``small neighborhood'' of it. For example, such a narrow band velocity may be provided by a TraceFEM for the spatial discretization of the surface PDE (cf. \cite{olshanskii2023eulerian}). Therefore, we are interested in a finite element solver for  \eqref{LS3} that is restricted to such a neighborhood of the surface, the so-called ``narrow band''.

In cases where a global velocity field $\bu$ is known, it may also be computationally efficient to solve the level set equation \eqref{LS3} in a narrow band instead of in the whole domain $\Omega$.
These locality requirements and efficiency advantages have led to the development of  narrow band level set methods, e.g.,  \cite{chopp1993computing,adalsteinsson1995fast,gomez2005reinitialization,cho2011direct,lee2014narrow, ngo2017efficient, xue2021new}.

In this paper, we introduce  a  new finite element narrow band level set method based on a specific extension technique that is outlined below.  For convenience, we assume that, in a narrow band,  a bulk velocity $\bu$ is available and consider the level set equation \eqref{LS3}. Narrow band methods for \eqref{LS3} require an extension procedure. To see why, assume that at time $t_n$ we are given an approximation $\phi_h^n$ of $\phi(\cdot,t_n)$ in a neighborhood $\Omega_\Gamma^n$ of $\Gamma^n_h\approx\Gamma(t_n)$ such that we a priori know that $\Gamma^{n+1}_h\subset\Omega_\Gamma^n$; cf. Fig.~\ref{fig_NarrowBand}. In the Eulerian setting, the numerical time integration over $[t_n,t_{n+1}]$ gives an approximation $\phi_h^{n+1}$ of $\phi(\cdot,t_{n+1})$ in $\Omega_\Gamma^n$, which defines $\Gamma^{n+1}_h$. To be able to perform the next time step $t_{n+1} \to t_{n+2}$, the computed approximated $\phi_h^{n+1}$ on $\Omega_\Gamma^n$ has to be extended to a suitable initial value on $\Omega_\Gamma^{n+1}$ such that $\Gamma^{n+2}_h\subset\Omega_\Gamma^{n+1}$, and so on.

In narrow band methods known from the literature, such an extension is typically based on a level set re-initialization technique.
A  well-known re-initialization technique is the fast marching method (FMM), which in the context of level set method approximates the distance from a point to a given surface. The fast marching method presented in the original paper \cite{Sethian1996b} is first-order accurate. Second-order accurate finite difference variants are presented in \cite{Sethian1999A,Chopp2001}. A drawback of using re-initialization in level set methods is the difficulty in controlling the change of the surface position. To address this issue several  finite difference-based approaches have been introduced \cite{Adalsteinsson1999,Chopp2001}.
Another class of re-initialization methods involves solving an evolutionary PDE to obtain a stationary solution that satisfies the Eikonal equation \cite{Sussman1999,Tornberg2000}. However, these methods have drawbacks due to parameter selection and high computational costs in a narrow band setting. An elliptic PDE-based approach \cite{basting2013minimization,xue2021new} avoids time marching to a stationary solution but requires solving a nonlinear problem and prescribing artificial boundary conditions in a narrow band.
To the best of our knowledge, all these methods  lack rigorous error estimates that would fit the finite element analysis of the entire narrow band discretization method.

In this paper, we present yet another approach for the extension of $\phi_h^{n+1}$ on $\Omega_\Gamma^n$ to a suitable initial value on $\Omega_\Gamma^{n+1}$. The extension method proposed here is based on a finite element $L^2$ or $H^1$ projection combined with the \emph{ghost-penalty method}. The ghost-penalty method is widely used to enhance the stability of finite element formulations~\cite{Burman2010a,burman2015cutfem}. More recently, it was also
suggested and analyzed as an implicit extension procedure for Eulerian finite element formulations of PDEs in time-dependent domains~\cite{LehrenfeldOlshanskii2019,von2022unfitted}. We utilize the ghost-penalty in this latter capacity. The resulting extension procedure can be formulated as a \emph{linear variational problem} in a narrow band around the surface. This makes it both computationally efficient and amenable to rigorous error analysis.

It is natural to combine this new extension  method, which is variational and finite element-based,   with a finite element
spatial discretization for the transport equation \eqref{LS3}. Discontinuous (DG) finite element methods  are known to be very suitable for convection-dominated or pure transport problems, cf. e.g., \cite{Brezzi2004,DolejsiFeistauer2015,DiPietroErn2012,FeistauerSvadlenka2004,burman2010interior}. We use   a DG scheme from \cite{Feistauer2016}.
In that paper a class of DG methods for the linear level set equation \eqref{LS3} is considered and for the space semidiscrete scheme with piecewise polynomials of degree $k$ an $L^2$ error bound $\mathcal{O}(h^{k+\frac12})$ is derived, provided the solution $\phi$ is sufficiently smooth. The analysis of that paper applies also if the velocity field is not divergence-free. Such a DG space discretization of the level set equation \eqref{LS3} can be combined with DG in time, cf. \cite{Feistauer2016}. We will use a  BDF method for the discrete time integration.

The main new contribution of this paper is the particular extension method that we propose. It can be combined with any reasonable finite element based discretization method for the level set equation.

One important aspect in any narrow band level set method is the choice of suitable boundary conditions for the level set equation on the inflow boundary part of the narrow domain. The extension method that we propose has some freedom in the choice of the domains  used for the local projection and for the ghost-penalty stabilization.  It turns out that for an appropriate choice of these domains one  can obtain higher order approximations of the zero level of $\phi$, i.e., of the surface, even if the numerical boundary data have only lower order accuracy.

In this paper we do \emph{not} use any re-initialization procedure in our narrow band level set method.  It turns out that even in cases with rather large deformations of $\Gamma$ we obtain reasonably accurate and stable interface recovery without the need for re-distancing $\phi_h$.
In applications with (very) large deformations  or in problems where an accurate approximation of the distance to $\Gamma_h$ is required, one may benefit from post-processing $\phi_h^n$ with some standard re-distancing technique (e.g., a variant of fast marching) to obtain $\psi_h^n \simeq \text{dist}(\cdot, \Gamma_h^n)$. Using this post-processing only every $k$th time step, with $k$ ``large'',  avoids systematic errors that could arise from using the re-distancing of $\phi_h^n$ in the extension procedure in every time step.

The remainder of this paper is organized as follows. In Section~\ref{sec_transport_overview} we specify our assumptions on the level set function and explain the structure of the narrow band discretization method. In Section~\ref{s:bc} we discuss the issue of numerical boundary conditions on the inflow boundary of the narrow band. In section \ref{sec_transport_basics} we explain the  methods we use for spatial and  time discretization of the level set equation. The main contribution of this paper is presented in Section~\ref{s:Ext}. We introduce the extension method and derive accuracy bounds for it. We also present here results of numerical experiments solely for the extension method. In Section~\ref{sec_transport_whole_algo} we put the different ingredients together and present the complete narrow band algorithm for discretization of the level set equation. Finally, in Section~\ref{sec_transport_experiments} we demonstrate the performance of  this algorithm in  numerical experiments.

\section{Level set equation and structure of the method} \label{sec_transport_overview}

We formulate the problem that we want to solve and outline the structure of our method.
 We assume that the evolving surface is given by the zero level of a sufficiently smooth level set function $\phi$,   $\Gamma(t)= \{\, x \in \Omega\,\mid\,\phi(x,t)=0\,\}$.  In  a neighborhood $\cO(\Gamma(t))= \Omega_\epsilon(t):=\{\, x \in \R^d~|~|\phi(x,t)| < \epsilon\,\}$ of the surface, with fixed $\epsilon >0 $,  we assume $\phi$ to be close to a signed distance function: For fixed constants $c_1$, $c_2$
\[
  0<c_1 \leq |\nabla \phi(x,t)| \leq c_2   \quad \text{for}~~x \in  \cO(\Gamma(t)), ~ t \in [0,T],
\]
holds.  Then the level set function is the unique solution of
\begin{equation*}
	\dfrac{\partial \phi}{\partial t}  + \bu \cdot \nabla \phi   = 0 \quad \text{on}~ \bigcup_{t \in (0,T]}\cO(\Gamma(t))\times\{t\},
\end{equation*}
with a suitable initial condition $\phi(x,0) = \phi_0(x)$ for $x \in  \cO(\Gamma(0))$.
Note that we do not need inflow boundary conditions because the spatial boundary of the space-time neighborhood $\bigcup_{t \in (0,T]}\cO(\Gamma(t))\times \{t\}$ is a characteristic boundary. \emph{Our primary interest is in accurately recovering} $\Gamma(t)$ rather than approximating $\phi$ over the entire neighborhood $\bigcup_{t \in [0,T]} \cO(\Gamma(t)) \times \{t\}$. Therefore, it is sufficient to have an accurate approximation of $\phi$ on a smaller subdomain that still contains $\Gamma(t)$ for $t \in [0,T]$. This smaller domain  will be specified below.

We first outline the time stepping procedure. We aim to use narrow bands $\Omega_\G^n$ with a fixed width and a varying time step size, such that $\G^{n+1}$ is contained in $\Omega_\G^n$. The choice of the time step sizes $\Delta t_n$ will be discussed later. Consider time nodes $t_n$, $n = 1, \dots, N$, with $0=t_0 < t_1 < \dots < t_N = T$.  We use the notations $f^n(\cdot) = f(\cdot, t_n)$ and  $\G^n = \G(t_n)$. In   the background domain  $\Omega$ we assume a family of  shape regular quasi-uniform simplicial triangulations $\{\cT_h\}_{h >0}$. The subset of simplices that intersect $\Gamma(t_n)$ is denoted by $\cT_\G^n=\{\, T \in \cT_h~|~{\rm meas}_{d-1}(T \cap \G^n) >0\,\}$. To define the narrow bands used in the method, we introduce the notation for a set of neighboring elements of a sub-triangulation $\omega_h \subset \cT_h$:
\begin{equation*}
    \cN^1(\omega_h)  := \left\{T \in \cT_h ~|~ T \cap \omega_h \neq \emptyset \right\} \quad \text{and} \quad \cN^j(\omega_h) = \cN^1(\cN^{j-1}(\omega_h)), \quad j \geq 2.
\end{equation*}

We define the narrow band $\Omega_\G^n$, $1 \leq n \leq N$, by adding a fixed number $J$ of layers of elements to the ``cut'' elements $\cT_\G^n$:
\begin{equation} \label{narrowb}
    \Omega_\G^n = \cN^J(\cT_\G^n).
\end{equation}
Hence, for $h$ sufficiently small we have embedding $\cT_\G^n \subset \Omega_\Gamma^n \subset \cO(\Gamma(t_n))$. We chose the time step  size $\Delta t_n$ sufficiently small, such that the surface $\G^{n+1}$ is contained in $\Omega_\Gamma^n$, $n=0,1,\ldots,N-1$, cf. Fig.~\ref{fig_NarrowBand} for a 1D  illustration. In Section~\ref{sec_transport_whole_algo} we discuss how this condition is handled in our algorithm.

\begin{figure}[!ht]
    \centering
    \includegraphics[width=0.5\textwidth]{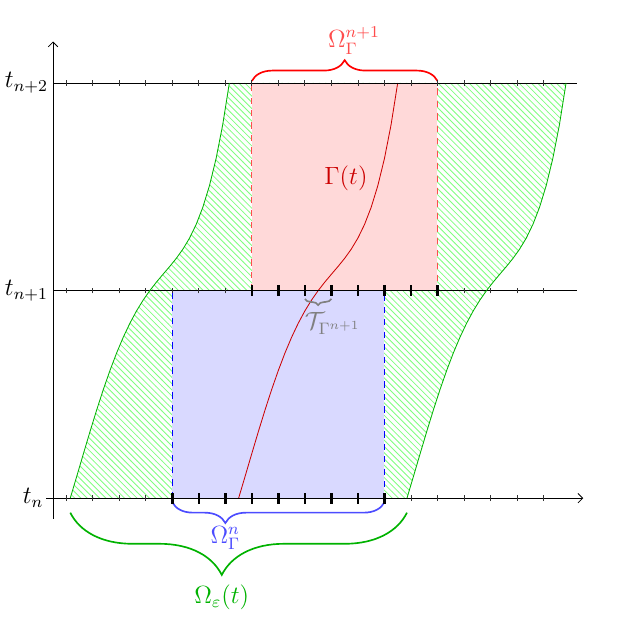}
    \caption{Sketch of narrow band $\mathcal{O}(\Gamma(t))=\Omega_\epsilon(t)$ and successive time slabs $\Omega_\Gamma^n \times [t_n,t_{n+1}]$, $\Omega_\Gamma^{n+1} \times [t_{n+1},t_{n+2}]$}
    \label{fig_NarrowBand}
\end{figure}

In an Eulerian framework, we (approximately) solve the level set equation   on a space-time subdomain $\Omega_\Gamma^n \times [t_n,t_{n+1}]$ and  repeat this on the next space-time subdomain $\Omega_\Gamma^{n+1} \times [t_{n+1},t_{n+2}]$. Clearly, for a well-posed formulation of the level set equation on $\Omega_\Gamma^n \times [t_n,t_{n+1}]$ we need initial data on $\Omega_\Gamma^n$ for $t=t_n$ and boundary data on the inflow part of the piecewise planar boundary of $\Omega_\Gamma^n$.
The inflow boundary is given by
\[ \partial\Omega_{\Gamma,in}^n(t) =\{\, x \in \partial\Omega_{\Gamma}^n~|~ \bu(x,t) \cdot \bn_{\Omega_{\G^n}}(x) <0\, \}, \quad t \in [t_n,t_{n+1}],
\]
where $\bn_{\Omega_{\G^n}}$ denotes the outward pointing unit normal on $\partial\Omega_{\Gamma}^n$. For simplicity, in the remainder we assume that within each time interval the inflow boundary is constant, i.e., $\partial\Omega_{\Gamma,in}^n(t)$ is independent of $t \in [t_n,t_{n+1}]$. This inflow boundary is denoted by $\partial\Omega_{\Gamma,in}^n$. The Dirichlet boundary data on this inflow boundary are denoted by $\phi_D^n$.

Given the narrow bands $\Omega_\Gamma^n$, $n=0,\ldots, N-1$, that satisfy the conditions specified above, the Eulerian narrow band method that we treat in this paper has the structure outlined in Algorithm~\ref{alg}.

\begin{algorithm}[h!] \caption{One time step structure}\label{alg}
\begin{itemize}
		\item[a)] Given $\phi_h^n$, defined on $\Omega_\Gamma^n$,  specify boundary conditions $\phi_D^n$ on the inflow boundary $\partial\Omega_{\Gamma,in}^n$.
		\item[b)]  Given the initial condition $\phi_h^n$ and boundary data $\phi_D^n$, solve the level set equation approximately on $\Omega_\Gamma^n \times [t_n,t_{n+1}]$. The result, defined on $\Omega_\Gamma^n$, is denoted by $\tilde \phi_h^{n+1} = S(\phi_h^n,\phi_D^n)$.
		\item[c)] Find $\phi_h^{n+1}$ as an extension of $\tilde \phi_h^{n+1}$ from $\Omega_\Gamma^n$  to $\Omega_\Gamma^{n+1}$.
	\end{itemize}
\end{algorithm}

An approach for  step a) is explained in Section~\ref{s:bc}. For solving the level set equation in step b) we apply a Galerkin DG discretization in  space combined with BDF in time; see Section~\ref{sec_transport_basics}. The main new contribution of this paper is the extension method used in step c), which is explained  in Section~\ref{s:Ext}.

\section{Construction of inflow boundary data} \label{s:bc}
The   Dirichlet boundary data on $\partial\Omega_{\Gamma,in}^n$
will be given by a (linear) mapping $B_n: \phi_h^n \to \phi_D^n$, with $\phi_D^n=\phi_D^n(x,t)$, $x \in \partial\Omega_{\Gamma,in}^n$, $t \in [t_n,t_{n+1}]$. To define $B_n$, we use a straightforward approach.

One obvious possibility is to take the  boundary values at time $t_n$,
\begin{equation} \label{bnd_data_BDF1_lo}
   \phi_D^n= B_n\phi_h^n:=(\phi_h^n)_{|\partial\Omega_{\Gamma,in}^n},
\end{equation}
i.e., we simply take the values of the approximation at $t=t_n$ restricted to the inflow boundary. These boundary data are independent of $t \in [t_n,t_{n+1}]$.  If $\phi_h^n(x) = \phi(x,t_n)$ for $x \in \Omega_\Gamma^n$, this construction yields boundary data that are  first order accurate in $\Delta t$. A second order (in time) approximation is obtained using
\begin{equation} \label{bnd_data_BDF1_ho}
  \phi_D^n = B_n(\phi_h^n, \bu^n) := (\phi_h^n)\big|_{\partial\Omega_{\Gamma,in}^n} - (t-t_n)\big(\bu^n \cdot \nabla \phi_h^n\big)\big|_{\partial\Omega_{\Gamma,in}^n}.
\end{equation}
Both options \eqref{bnd_data_BDF1_lo} and \eqref{bnd_data_BDF1_ho} are natural choices in a BDF1 time stepping procedure. If one uses BDF2 it is natural to incorporate information of $\phi_h^{n-1}$ in the boundary data. Analogously to \eqref{bnd_data_BDF1_lo} we can use
\begin{equation} \label{bnd_data_BDF2_lo}
    \phi_D^n = B_n(\phi_h^n, \phi_h^{n-1}) := 2(\phi_h^n)_{|\partial\Omega_{\Gamma,in}^n} - (\phi_h^{n-1})_{|\partial\Omega_{\Gamma,in}^n},
 \end{equation}
which is second order accurate in $\Delta t$, or a higher order approximation along the same lines as \eqref{bnd_data_BDF1_ho}. We will discuss the choice of the boundary data in Section~\ref{sec_transport_experiments}.

The default boundary data operator that we use in our numerical experiments is the one in \eqref{bnd_data_BDF2_lo} for BDF2. We shall discuss below in Section~\ref{sec_transport_whole_algo} that one can obtain higher order discretization accuracy close to the surface even if low order (in $\Delta t$) accurate Dirichlet boundary data are used. This effect is essentially caused by  the transport nature of the  level set  problem, which implies that errors arising on a level set $|\phi(\cdot,t_n)|=ch$ do not enter the domain formed by the level sets $|\phi(\cdot,t)| < ch$ for $t \geq t_n$.

\section{Discretization of the level set equation} \label{sec_transport_basics}
This section explains the space and time discretization that we use on one time slab $\Omega_\Gamma^n \times [t_n,t_{n+1}]$. We assume given Dirichlet boundary data $\phi_D^n$ on the inflow boundary $\partial\Omega_{\Gamma,in}^n$. To simplify the notation, we delete the time slab superscript $n$ and write $\Omega_\Gamma$, $\phi_h$, $\phi_D$ instead of $\Omega_\Gamma^n$, $\phi_h^n$, $\phi_D^n$ in this section.
We employ the discontinuous Galerkin scheme detailed in \cite{Feistauer2016}, paired with a BDF-scheme for the time derivative. Unlike most of the methods in the literature, this DG method does not require the velocity $\bu$ to be divergence-free, which is beneficial for  applications that motivated this work. 

We first consider the space discretization.  The finite element space is given by
\begin{equation}\label{eqn_def_DGSpace_transport}
    V_h^{\text{DG}}(\Omega_\Gamma) := \{\, \psi \in L_2(\Omega_\Gamma) ~|~ \psi|_{T} \in P_{k}(T) ~ \forall T \in (\cT_h \cap \Omega_\Gamma)\, \}.
\end{equation}
As is usual in DG type methods, we need the upwind fluxes and a suitable jump operator across the faces in  $\cT_h$. We split the boundary of an element $T \in (\cT_h \cap \Omega_\Gamma)$ into its inflow and outflow part
\begin{align*}
    \partial T^{-}(t) &:= \left\{\, x \in \partial T ~:~ \bu(x, t) \cdot \bn_T(x) < 0 \right\}, \\[1ex]
    \partial T^{+}(t) &:= \left\{\, x \in \partial T ~:~ \bu(x, t) \cdot \bn_T(x) \geq 0 \right\},
\end{align*}
where $\bn_T$ denotes the outward pointing unit normal on $T$, cf. Fig.~\ref{fig_common_face_triangles}.


\begin{figure}[ht!]
    \centering
    \begin{tikzpicture}[scale=0.8]
        \coordinate (A) at (0,2);
        \coordinate (B) at (0,-2);
        \coordinate (C) at (3.464, 0);  
        \coordinate (D) at (-3.464, 0); 

        \draw (A) -- (B) -- (C) -- cycle;

        \draw (A) -- (D) -- (B);

        \node at (0.3,1.3) {$F$};
        \node at (1.5,0) {$T$};
        \node at (-1.8,0) {$\tilde{T}$};

        \draw[->,thick] (0,0) -- ++(-1.4, 0);
        \node at (-0.7,0.3) {$\bn_T$};

    \end{tikzpicture}
    \caption{Adjacent Triangles $T$ and $\tilde{T}$ sharing the face $F$.}
    \label{fig_common_face_triangles}
\end{figure}
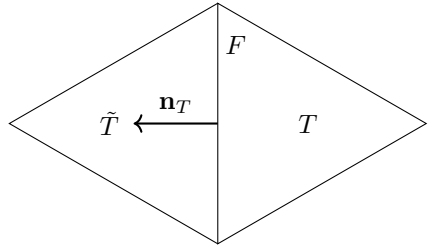

We introduce the jump operator $[\psi_h]=[\psi_h]|_{\partial T}=\psi_h|_{T}-\psi_h|_{\tT}$, $\psi_h \in V_h^{\text{DG}}$. For a derivation of the following semi-discrete DG finite element method we refer to \cite{Feistauer2016}: Find $\phi_h(t) \in V_h^{\text{DG}}(\Omega_\Gamma)$ such that $\phi_h(t_n)=\phi_h^n$ and for $t \in [t_n,t_{n+1}]$:
\begin{multline}\label{eqn_semi_discrete_levelset_transport}
 \mathlarger{\mathlarger{\sum}}_{T \in (\cT_h \cap \Omega_\Gamma)} \left( \int_T \frac{\partial \phi_h}{\partial t}  ~ \psi_h \dx + \int_T \left(\bu \cdot \nabla\phih \right) \psi_h \dx - \hspace*{-0.2cm} \int_{\partial T^- \backslash \partial \Omega_\Gamma} \hspace{-0.2cm} [\phih] \psi_h \left(\bu \cdot \bn_T \right) \ds  \right) \\[1ex]
        - \int_{\partial \Omega_{\Gamma,in}} \phih \psih \left(\bu \cdot \bn_{\Omega_\Gamma}\right) \ds = - \int_{\partial\Omega_{\Gamma,in}} \phi_D \psi_h \left( \bu \cdot \bn_{\Omega_\Gamma}\right) \ds
        \quad \text{for all } \psi_h \in V_h^{\text{DG}}(\Omega_\Gamma).
\end{multline}
Note that $\bu$ and $\phi_D$ may depend on $t$.
For this  formulation on a  fixed polygonal domain $\Omega \subset \R^d$ (instead of our $h$ dependent domain $\Omega_\Gamma$), $L^2$-norm error bounds of order $h^{k+\frac12}$ are derived in  \cite[Theorem 3.6,3.7]{Feistauer2016}.

For the discretization in time, different standard approaches are known in the literature. In \cite{Feistauer2016}, a discontinuous Galerkin scheme in time is used. In \cite{MarchandiseRemacleChevaugeon2006, DiPietroLoForteParolini2006}, a DG method in space is combined with a Runge-Kutta time discretization scheme. We opt for low-order BDF schemes for the following reasons: they necessitate only one evaluation of $\bu$ per time step and are easily implemented within our finite element software. Specifically, the BDF1 scheme (implicit Euler) leads to the following fully discrete problem within a single time interval: For given $\phih^n \in V_h^{\text{DG}}(\Omega_\Gamma^n)$, determine $\tilde \phi_h^{n+1} \in V_h^{\text{DG}}(\Omega_\Gamma^n)$ such that
\begin{equation}\label{eqn_discrete_levelset_transport}
    \begin{split2}
        && \mathlarger{\mathlarger{\sum}}_{T \in (\cT_h \cap \Omega_\Gamma^n)} \left( \int_T \dfrac{\tilde \phi_h^{n+1} - \phih^{n}}{\Delta t_n} \, \psi_h \dx + \int_T \left(\bu^{n+1} \cdot \nabla \tilde \phi_h^{n+1} \right) \psi_h \dx - \hspace*{-0.2cm} \int_{\partial T^- \backslash \partial \Omega_\Gamma^n} \hspace{-0.2cm} [\tilde \phi_h^{n+1}] \psi_h \left(\bu^{n+1} \cdot \bn_T\right) \ds  \right) \\[1ex]
        &-& \int_{\partial \Omega_{\Gamma,in}^n} \tilde \phi_h^{n+1} \psih \left(\bu^{n+1} \cdot \bn_{\Omega_\Gamma^n}\right) \ds = - \int_{\partial\Omega_{\Gamma,in}^n} \phi_D^{n+1} \, \psi_h \left(\bu^{n+1} \cdot \bn_{\Omega_\Gamma^n}\right) \ds
        \qquad \text{for all } \psi_h \in V_h^{\text{DG}}(\Omega_\Gamma^n).
    \end{split2}
\end{equation}

The extension of this method to higher order BDF (used in the numerical experiments in Section~\ref{sec_transport_experiments}) is straightforward.

\begin{remark} \rm
    The specific choice of discretization method for the level set equation in step b) of Algorithm~\ref{alg} is not crucial for the performance of our narrow band level set method. However, for the extension method discussed in the next section, it is essential to use finite element spaces. Therefore, using a finite element discretization for the level set equation is natural. Given that the level set equation is a pure transport equation, a finite element discontinuous Galerkin (DG) technique is an appropriate choice. Related DG techniques can be found in \cite{SudirhamVanDerVegtVanDamme2006,SudirhamVanDerVegt2008,CockburnShu1989, CockburnHouShu1990}. Alternatively, a stabilized conforming finite element method, such as the streamline diffusion finite element method (cf. \cite{RoosStynes}), could be used.
\end{remark}

In the DG method, we obtain finite element approximations that in general are discontinuous across the edges/faces  of the triangulation. One variant of the extension method treated below applies to continuous $H^1$ conforming finite element functions. We therefore use a simple quasi-interpolation operator that maps (with optimal order of accuracy) the DG finite element functions into the $H^1$ conforming Lagrange finite element space with the same polynomial degree. We apply a standard Oswald interpolation in which different values of a DG finite element function around a finite element node are averaged to determine the value of the continuous finite element function at that node, cf. \cite{Guermond2017}. Using this quasi-interpolation operator in the $H^1$ conforming finite element space we obtain a continuous approximation $\Gamma_h(t_n)$ of the zero level $\Gamma(t_n)$.


\section{Extension of the level set function} \label{s:Ext}

In this section we propose a particular extension method for the step~c) of Algorithm~\ref{alg}, the extension of $\tilde\phi_h^{n+1}$ from $\Omega_\Gamma^n$ to $\Omega_\Gamma^{n+1}$. The proposed extension method uses a \emph{ghost-penalty technique}. Several variants of the ghost-penalty are known in the literature. The original version, known as local projection stabilization (LPS), was introduced in \cite{Burman2010a}. Another variant is the normal derivative jump stabilization, extensively discussed in works such as \cite{BurmanHansbo2012,  MassingLarsonLoggRognes2014} for its first-order version, and in \cite{Lehrenfeld2017, SchottWall2014} for higher-order versions.

In our method we use the ``volumetric jump'' or ``direct'' formulation of the ghost penalty stabilization, derived in \cite{Preuss2018} and further investigated in \cite{LehrenfeldOlshanskii2019, von2022unfitted}. For a comparison of different ghost penalty techniques, readers are referred to \cite{LehrenfeldOlshanskii2019}. In this work, we restrict ourselves to the direct formulation. To the best of our knowledge, there is no existing literature where the ghost penalty technique was applied within the framework of a narrow band level set method.

We derive and analyze the extension  method in a more general setting where a function $\tilde \phi \in H^1(\Omega_h^p)$ is extended from a projection-subdomain $\Omega_h^p\subset\Omega$ to some extension-domain $\Omega_h^e$ such that $\Omega_h^p\subset\Omega_h^e\subset\Omega$. The result of such an extension will be a continuous finite element function $\phi_h$. There is no requirement for $\tilde \phi$ and $\phi_h$ to coincide in $\Omega_h^p$. We then apply this method for an extension from $\Omega_\Gamma^n$ to $\Omega_\Gamma^{n+1}$ in step c) of Algorithm~\ref{alg}, cf. Section~\ref{sec_transport_whole_algo}.

We assume that both $\Omega_h^p$ and $\Omega_h^e$ are unions of simplices from $\cT_h$. For simplicity, we identify a set of simplices with the domain defined by the union of the elements. We need some further assumption concerning the geometry of  $\Omega_h^p$ and $\Omega_h^e$. Essentially we want $\Omega_h^e$ to be an extension of the domain $\Omega_h^p$ by a few additional layers of elements. In  our application, both   $\Omega_h^p$ and  $\Omega_h^e$ are tubular neighborhoods of width $\sim h$ around a given surface, cf.  Fig.~\ref{fig_GP_elements_faces}. This specific  structure of $\Omega_h^p$ and $\Omega_h^e$, however, is not needed in the analysis presented in this section. Instead, the following more general assumption is made.

\begin{assumption} \label{Ass1} \rm
    We assume that $\Omega_h^p$ consists of a subset of elements $T\in \cT_h$ such that $\Omega_h^p$ is path-connected  and such that any $T \subset \Omega_h^p$ has a common face (3D) or edge (2D) with another $T \subset \Omega_h^p$. The union of simplices $\Omega_h^e$ is such that it contains $\Omega_h^p$ and for any $T \in \Omega_h^e \backslash \Omega_h^p$ there is a path of length at most $ch$, with a uniform constant $c$, that is completely contained in $\Omega_h^e$ and connects $T$ with an element contained in $\Omega_h^p$.
\end{assumption}

The space of \emph{continuous} finite element functions on a triangulated domain $\omega_h$ is denoted by
\begin{equation*}
    V_h(\omega_h) := \{\psi_h \in C(\omega_h): \psi_h|_T \in P_{k}(T) ~\forall T \in \omega_h\}.
\end{equation*}

 In the definition of the ghost-penalty stabilization below  we need the difference between $\Omega_h^p$ and its extension, i.e., $\Omega_h^\text{diff} := \Omega_h^e \backslash \Omega_h^p$.  We define the set of ghost penalty faces $\cF_h^{\text{GP}}$ by
\begin{equation*}
    \cF_h^{\text{GP}} := \left\{F \subset \partial T ~|~ T\in \Omega_h^\text{diff}, ~ F \not \subset \partial \Omega_h^e \right\} \cup \left\{F \subset \partial T~|~ T \in \Omega_h^p, ~ T\cap \partial \Omega_h^p \neq \emptyset\right\}.
\end{equation*}

This definition ensures that all faces inside the ``new'' elements $\Omega_h^\text{diff}$ and their ``inner neighbors'' in $\Omega_h^p$ are included. The faces on $\partial \Omega_h^e$ are not included, since these do not have two neighboring elements in $\Omega_h^e$. See Figure \ref{fig_GP_elements_faces} for an illustration of $\Omega_h^p$, $\Omega_h^e$, and the corresponding faces $\cF_h^{\text{GP}}$.
\begin{figure}[ht!]
    \centering
    \includegraphics[width=0.7\textwidth]{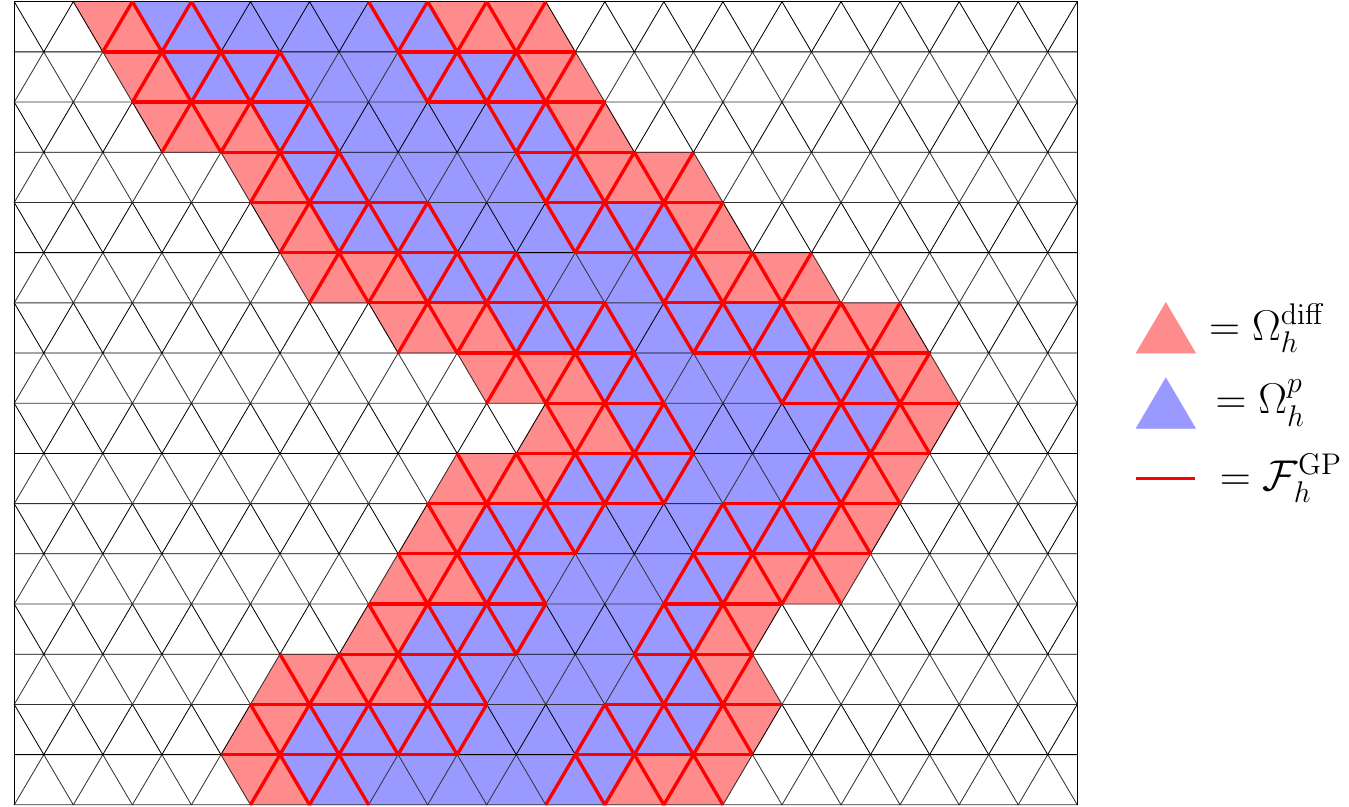}
    \caption{$\Omega_h^p$, $\Omega_h^e$, and the corresponding faces $\cF_h^{\text{GP}}$.}
    \label{fig_GP_elements_faces}
\end{figure}

For a face $F = {T}_1 \cap {T}_2 \in \cF_h^{\text{GP}}$ its corresponding patch is defined as $\omega(F) = {T}_1 \cup {T}_2$. For a  finite element function $\psi \in V_h(\omega(F))$, we use the notation $\psi_i = \cE^P \psi|_{T_i}$ with the canonical polynomial extension operator $\cE^P : P_{k}(T) \rightarrow P_{k}(\R^d)$. The ghost penalty bilinear form is defined as
\begin{equation}\label{eqn_def_GPExt_operator}
    s_h^{(\alpha)}(\phi, \psi) := \gamma^\text{ext} h^{-\alpha}  \sum_{F \in \cF_h^{\text{GP}}} \int_{\omega(F)} (\phi_1 - \phi_2) (\psi_1 - \psi_2 ) \dx, \quad \text{for } \phi, \psi \in V_h(\Omega_h^e),
\end{equation}
with parameters $\gamma^\text{ext} > 0$ and $\alpha$.

 For the error analysis, we need the ghost penalty bilinear form $s_h^{(\alpha)}(\cdot,\cdot)$ to be well defined not only for finite element functions. We define $\psi_i = \cE^P \Pi_{T_i} \psi|_{T_i}$ for $i=1,2$, where $\Pi_{T_i}$ is the $L^2(T_i)$-projection into $P_{k}(T_i)$, $i=1,2$, cf. \cite{LehrenfeldOlshanskii2019}.  If $\psi \in V_h(\Omega_h^e)$ it holds $\Pi_{T_i} \psi|_{T_i} = \psi|_{T_i}$. Using this,  the bilinear form \eqref{eqn_def_GPExt_operator} is extended to $L^2(\Omega_h^e) \times L^2(\Omega_h^e) $.

 We  
 now define the following bilinear forms:
\begin{equation*}
    a_h^{\text{ext}}(\phih, \psih) := (\phih, \psih)_{\Omega_h^p} + s_h^{(0)}(\phih, \psi_h) \quad \text{and} \quad a_{1,h}^{\text{ext}}(\phih, \psih) := (\phih, \psih)_{1, \Omega_h^p} + s_h^{(2)}(\phih, \psi_h),
\end{equation*}
with $(\cdot, \cdot)_{\Omega_h^p}$ and $(\cdot, \cdot)_{1, \Omega_h^p}$, the $L^2$- and $H^1$-inner product on $\Omega_h^p$, respectively. The scaling with $\alpha=0$ and $\alpha=2$ in these bilinear forms is based on results from the literature.

The corresponding extension problems read:  For a given function $\tilde \phi$ defined on $\Omega_h^p$, determine $\phih \in V_h(\Omega_h^e)$ such that
\begin{subequations}\label{eqn_def_GPExt_problem}
    \begin{math}
        a_h^\text{ext}(\phih, \psih) &=& (\tilde \phi, \psi_h)_{\Omega_h^p} \qquad &\text{for all}& \quad  \psih \in V_h(\Omega_h^e), \label{eqn_def_GPExt_problem_L2} \\[1ex]
        a_{1,h}^\text{ext}(\phih, \psih) &=& (\tilde \phi, \psi_h)_{1,\Omega_h^p} \qquad &\text{for all}& \quad \psih \in V_h(\Omega_h^e). \label{eqn_def_GPExt_problem_H1}
    \end{math}
\end{subequations}
In section~\ref{sec_transport_whole_algo} we use this extension method in the setting of the narrow band level set method, cf. component c) of the Algorithm~\ref{alg}. We analyze both formulations in the next section.

\subsection{An error analysis of the extension.}
We derive error estimates for the extension methods in the $L^2$- and $H^1$-norm on $\Omega_h^e$ denoted by $\|\cdot\|_{\Omega_h^e}$ and $\|\cdot\|_{1, \Omega_h^e}$, respectively. It is also helpful to introduce notations for the norms $\|\cdot\|_a = \sqrt{a_h^\text{ext}(\cdot, \cdot)}$ and $\| \cdot \|_{1,a} = \sqrt{a_{1,h}^\text{ext}(\cdot, \cdot)}$.

We summarize some useful results from \cite{LehrenfeldOlshanskii2019} in the following lemma.
\begin{lemma} \label{LemmaGP}
    Let $I_h$ be the Lagrange interpolation operator into the finite element space $V_h$.  For arbitrary $\psih \in V_h\left(\Omega_h^e\right)$ and $\psi \in H^{k+1}\left(\Omega_h^e\right)$ there holds
    \begin{align}
        \|\psih\|_{\Omega_h^e}^2 &\lesssim  \|\psih\|_{\Omega_h^p}^2 + s_h^{(0)}(\psih, \psih), \label{eqn_from_LO19_GP_L2_estimate}\\[1ex]
        \|\nabla \psih\|_{\Omega_h^e}^2 &\lesssim  \|\nabla \psih\|_{\Omega_h^p}^2 + s_h^{(2)}(\psih, \psih), \label{eqn_from_LO19_GP_h1_estimate}\\[1ex]
        s_h^{(0)}(\psi, \psi) &\lesssim  h^{2k+2} \|\psi\|_{H^{k+1}(\Omega_h^e)}^2, \label{eqn_from_LO19_GP_continuity_estimate}\\[1ex]
        s_h^{(0)}(\psi-I_h \psi, \psi-I_h \psi) &\lesssim  h^{2k+2}\|\psi\|_{H^{k+1}(\Omega_h^e)}^2. \label{eqn_from_LO19_GP_interpolation_estimate}
    \end{align}
Here and below the notation $a\lesssim b$ means that the inequality $a\le c\,b$ holds with a constant $c$ which is independent of $h$ and the position of $\Gamma$ in the background mesh.
\end{lemma}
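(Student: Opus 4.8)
Since the four estimates are quoted from \cite{LehrenfeldOlshanskii2019}, the plan is to recover them from two local building blocks — a single-face norm-transfer inequality for the stability bounds \eqref{eqn_from_LO19_GP_L2_estimate}--\eqref{eqn_from_LO19_GP_h1_estimate}, and an approximation-plus-overlap argument for the consistency bounds \eqref{eqn_from_LO19_GP_continuity_estimate}--\eqref{eqn_from_LO19_GP_interpolation_estimate} — together with the geometric structure encoded in Assumption~\ref{Ass1}. Throughout, every constant must be traced back to depend only on the shape regularity of $\cT_h$ and the polynomial degree $k$, so that the final $\lesssim$ is genuinely uniform in $h$ and in the position of $\G$.

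For \eqref{eqn_from_LO19_GP_L2_estimate} I would first establish a one-face estimate: for a ghost-penalty face $F=T_1\cap T_2$ with patch $\omega(F)$ and arbitrary $p_i\in P_k(T_i)$,
\begin{equation*}
   \|p_1\|_{T_1}^2 \lesssim \|p_2\|_{T_2}^2 + \int_{\omega(F)} (\cE^P p_1 - \cE^P p_2)^2\dx .
\end{equation*}
This follows by writing $\cE^P p_1 = \cE^P p_2 + (\cE^P p_1 - \cE^P p_2)$ on $T_1$ and invoking the norm equivalence $\|\cE^P p_2\|_{T_1}\lesssim \|p_2\|_{T_2}$, valid on the finite-dimensional space $P_k$ over a shape-regular two-simplex patch after scaling to a reference configuration. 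Next I would use Assumption~\ref{Ass1}: every $T\in\Omega_h^e\setminus\Omega_h^p$ is joined to some element of $\Omega_h^p$ by a path of length $\lesssim h$, hence crossing only $O(1)$ ghost-penalty faces. Telescoping the one-face estimate along this path bounds $\|\psih\|_T^2$ by the value of $\|\psih\|_{\Omega_h^p}^2$ on the terminal element plus the intermediate face-differences, which are exactly the integrands of $s_h^{(0)}(\psih,\psih)$. Summing over all $T\in\Omega_h^e$ and using that each path is short — so each $\Omega_h^p$-element and each face is reused only $O(1)$ times — yields \eqref{eqn_from_LO19_GP_L2_estimate}. Estimate \eqref{eqn_from_LO19_GP_h1_estimate} is obtained by the same telescoping, starting instead from the gradient one-face estimate $\|\nabla p_1\|_{T_1}^2 \lesssim \|\nabla p_2\|_{T_2}^2 + h^{-2}\int_{\omega(F)}(\cE^P p_1 - \cE^P p_2)^2\dx$; the $h^{-2}$ factor arises from an inverse inequality applied to the polynomial $\cE^P p_1 - \cE^P p_2$ on $\omega(F)$ and matches the scaling $\alpha=2$ in $s_h^{(2)}$.

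For the consistency bounds I would work face by face and then sum. Fix $F\in\cF_h^{\text{GP}}$ and let $q^\ast\in P_k(\R^d)$ be an averaged Taylor polynomial of $\psi$ on the patch $\omega(F)$, so that $\|\psi-q^\ast\|_{\omega(F)}\lesssim h^{k+1}\|\psi\|_{H^{k+1}(\omega(F))}$ and, being already a global polynomial, $\cE^P(q^\ast|_{T_i})=q^\ast$. Writing $\cE^P\Pi_{T_1}\psi-\cE^P\Pi_{T_2}\psi = (\cE^P\Pi_{T_1}\psi-q^\ast)-(\cE^P\Pi_{T_2}\psi-q^\ast)$ and combining the extension stability $\|\cE^P r\|_{\omega(F)}\lesssim\|r\|_{T_i}$ for $r\in P_k(T_i)$ with the projection estimate $\|\Pi_{T_i}\psi-\psi\|_{T_i}\lesssim h^{k+1}\|\psi\|_{H^{k+1}(T_i)}$, each term is $\lesssim h^{k+1}\|\psi\|_{H^{k+1}(\omega(F))}$. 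Squaring, multiplying by $\gamma^{\text{ext}}$ (here $\alpha=0$, so no extra $h$-power enters), and summing over $F$ with finite overlap of the patches gives \eqref{eqn_from_LO19_GP_continuity_estimate}. Estimate \eqref{eqn_from_LO19_GP_interpolation_estimate} then follows identically once I observe that $I_h\psi$ is piecewise polynomial, whence $\Pi_{T_i}(I_h\psi)=I_h\psi|_{T_i}$: the per-face difference for $\psi-I_h\psi$ reduces to $\cE^P(\Pi_{T_i}\psi - I_h\psi|_{T_i})$, and since both $\Pi_{T_i}\psi$ and $I_h\psi$ approximate $\psi$ to order $h^{k+1}$, the same per-face bound applies.

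The main obstacle is not any single inequality but the \emph{uniformity} of the constants in the telescoping step for \eqref{eqn_from_LO19_GP_L2_estimate}--\eqref{eqn_from_LO19_GP_h1_estimate}. One must guarantee simultaneously that the path length in Assumption~\ref{Ass1} translates into an $O(1)$ number of crossed faces — so that the geometric factor accumulated in the telescoping stays bounded with an $h$-independent ratio — and that, when summing the local bounds over all target elements, no face or projection-domain element is charged more than $O(1)$ times. Both rely on quasi-uniformity and shape regularity to control the number of simplices within a ball of radius $\lesssim h$; making this counting rigorous, and verifying that the constant in the one-face norm equivalence is truly independent of how $\G$ cuts the background mesh, is where the care is needed.
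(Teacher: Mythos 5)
Your reconstruction is correct: the paper itself gives no argument here, deferring entirely to \cite{LehrenfeldOlshanskii2019} (Lemma 5.2 for \eqref{eqn_from_LO19_GP_L2_estimate}--\eqref{eqn_from_LO19_GP_h1_estimate} and Lemma 5.5 for \eqref{eqn_from_LO19_GP_continuity_estimate}--\eqref{eqn_from_LO19_GP_interpolation_estimate}), and your single-face norm-transfer plus telescoping argument for the stability bounds, and the averaged-Taylor-polynomial/finite-overlap argument for the consistency bounds, are precisely the proofs given in that reference. The uniformity concerns you flag (an $O(1)$ number of crossed faces per path and $O(1)$ reuse of each face and element) are exactly what Assumption~\ref{Ass1} together with shape regularity and quasi-uniformity is designed to deliver, so there is no gap.
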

\begin{proof}
    See \cite[Lemma 5.2]{LehrenfeldOlshanskii2019} for the proof of inequalities \eqref{eqn_from_LO19_GP_L2_estimate}--\eqref{eqn_from_LO19_GP_h1_estimate} and \cite[Lemma 5.5]{LehrenfeldOlshanskii2019} for the proof of inequalities \eqref{eqn_from_LO19_GP_continuity_estimate}--\eqref{eqn_from_LO19_GP_interpolation_estimate}. 
\end{proof}

From \eqref{eqn_from_LO19_GP_L2_estimate} and \eqref{eqn_from_LO19_GP_h1_estimate} it follows that
\begin{math}
    a_h^\text{ext}(\psih, \psih)  &\gtrsim& \| \psih \|_{\Omega_h^e}^2 \quad &\text{for all }& \psih \in V_h(\Omega_h^e), \label{eqn_GPExt_L2_elliptic} \\[1ex]
    a_{1,h}^\text{ext}(\psih, \psih)  &\gtrsim& \| \psih \|_{1, \Omega_h^e}^2 \quad &\text{for all }& \psih \in V_h(\Omega_h^e). \label{eqn_GPExt_H1_elliptic}
\end{math}
Thus, the bilinear forms $a_h^\text{ext}(\cdot, \cdot)$ and $a_{1,h}^\text{ext}(\cdot, \cdot)$ are elliptic on $V_h(\Omega_h^e)$ and the equations \eqref{eqn_def_GPExt_problem_L2} and \eqref{eqn_def_GPExt_problem_H1} are uniquely solvable.

For the error analysis we assume a  $\phi \in H^{k+1}(\Omega_h^e)$, and $\tilde \phi $ used in \eqref{eqn_def_GPExt_problem} is meant to be an approximation of this $\phi$ on $\Omega_h^p$. Since $\phi$ is defined on $\Omega_h^e$, the error $\phi - \phih$ is well defined on $\Omega_h^e$.

\begin{theorem}\label{thm_error_GPExt_L2}
    Let $\phi \in H^{k+1}(\Omega_h^e)$ be given and $\phih \in V_h(\Omega_h^e)$ the solution of \eqref{eqn_def_GPExt_problem_L2}. The following stability and error estimates hold
    \begin{align}
      \|\phi_h\|_a & \leq \left( \|\tilde \phi\|_{\Omega_h^p}^2-s_h^{(0)}(\phi_h,\phi_h)\right)^\frac12 \leq\|\tilde \phi\|_{\Omega_h^p} \label{stabest} \\
        \| \phi - \phih \|_{\Omega_h^e} & \lesssim \|\phi - \tilde \phi\|_{\Omega_h^p} + h^{k+1} \| \phi \|_{H^{k+1}(\Omega_h^e)} \label{err1} \\
        \| \phi - \phih \|_{1,\Omega_h^e} &  \lesssim h^{-1} \|\phi - \tilde \phi\|_{\Omega_h^p} +  h^{k} \| \phi \|_{H^{k+1}(\Omega_h^e)}. \label{err2}
    \end{align}
\end{theorem}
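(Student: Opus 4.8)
The plan is to establish the three estimates in turn, relying throughout on Lemma~\ref{LemmaGP} and on the ellipticity of $a_h^{\text{ext}}$ on $V_h(\Omega_h^e)$.

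For the stability bound \eqref{stabest} I would test the defining equation \eqref{eqn_def_GPExt_problem_L2} with $\psi_h = \phi_h$ to get $\|\phi_h\|_a^2 = a_h^{\text{ext}}(\phi_h,\phi_h) = (\tilde\phi,\phi_h)_{\Omega_h^p}$. The weaker inequality $\|\phi_h\|_a \le \|\tilde\phi\|_{\Omega_h^p}$ then follows from Cauchy--Schwarz on $\Omega_h^p$ together with $\|\phi_h\|_{\Omega_h^p}\le\|\phi_h\|_a$. For the sharper first inequality the cleanest route is to expand the nonnegative quantity $\|\tilde\phi-\phi_h\|_{\Omega_h^p}^2 = \|\tilde\phi\|_{\Omega_h^p}^2 - 2(\tilde\phi,\phi_h)_{\Omega_h^p} + \|\phi_h\|_{\Omega_h^p}^2$ and substitute $(\tilde\phi,\phi_h)_{\Omega_h^p} = \|\phi_h\|_a^2 = \|\phi_h\|_{\Omega_h^p}^2 + s_h^{(0)}(\phi_h,\phi_h)$; this gives $0 \le \|\tilde\phi\|_{\Omega_h^p}^2 - \|\phi_h\|_a^2 - s_h^{(0)}(\phi_h,\phi_h)$, which is exactly \eqref{stabest}.

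For the $L^2$ estimate \eqref{err1} I would split $\phi - \phi_h = (\phi - I_h\phi) + (I_h\phi - \phi_h)$ with $I_h$ the Lagrange interpolant. The first part is handled by standard interpolation, $\|\phi - I_h\phi\|_{\Omega_h^e}\lesssim h^{k+1}\|\phi\|_{H^{k+1}(\Omega_h^e)}$. For the discrete part $e_h := I_h\phi - \phi_h \in V_h(\Omega_h^e)$ I would use ellipticity, $\|e_h\|_{\Omega_h^e}\lesssim\|e_h\|_a$, and then compute $a_h^{\text{ext}}(e_h,e_h)$ by inserting the defining equation: since $a_h^{\text{ext}}(\phi_h,e_h) = (\tilde\phi,e_h)_{\Omega_h^p}$, one obtains the consistency identity $a_h^{\text{ext}}(e_h,e_h) = (I_h\phi - \tilde\phi, e_h)_{\Omega_h^p} + s_h^{(0)}(I_h\phi, e_h)$. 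I would bound the first term by Cauchy--Schwarz and the triangle inequality $\|I_h\phi - \tilde\phi\|_{\Omega_h^p}\le\|\phi - I_h\phi\|_{\Omega_h^p} + \|\phi - \tilde\phi\|_{\Omega_h^p}$, and the second by the Cauchy--Schwarz inequality for the positive semidefinite form $s_h^{(0)}$ together with $s_h^{(0)}(e_h,e_h)^{1/2}\le\|e_h\|_a$. After dividing by $\|e_h\|_a$ this yields $\|e_h\|_a \lesssim \|\phi-\tilde\phi\|_{\Omega_h^p} + h^{k+1}\|\phi\|_{H^{k+1}(\Omega_h^e)} + s_h^{(0)}(I_h\phi,I_h\phi)^{1/2}$. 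The only genuinely new ingredient is controlling $s_h^{(0)}(I_h\phi,I_h\phi)^{1/2}$: by the triangle inequality for the seminorm, $s_h^{(0)}(I_h\phi,I_h\phi)^{1/2}\le s_h^{(0)}(\phi,\phi)^{1/2} + s_h^{(0)}(\phi - I_h\phi, \phi - I_h\phi)^{1/2}$, and both summands are $\lesssim h^{k+1}\|\phi\|_{H^{k+1}(\Omega_h^e)}$ by \eqref{eqn_from_LO19_GP_continuity_estimate} and \eqref{eqn_from_LO19_GP_interpolation_estimate}. Combining with the interpolation bound for the first part gives \eqref{err1}.

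For the $H^1$ estimate \eqref{err2} I would again split via $I_h\phi$: the interpolation part gives $\|\phi - I_h\phi\|_{1,\Omega_h^e}\lesssim h^k\|\phi\|_{H^{k+1}(\Omega_h^e)}$, while for the discrete part $e_h\in V_h(\Omega_h^e)$ I would invoke the inverse inequality $\|e_h\|_{1,\Omega_h^e}\lesssim h^{-1}\|e_h\|_{\Omega_h^e}$ (valid on the quasi-uniform mesh) and feed in the $L^2$ bound on $\|e_h\|_{\Omega_h^e}$ already established. This produces the factor $h^{-1}$ in front of $\|\phi-\tilde\phi\|_{\Omega_h^p}$ and turns $h^{k+1}$ into $h^k$, giving \eqref{err2}. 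I expect the main obstacle to be the $L^2$ step: one must correctly isolate the ghost-penalty consistency term $s_h^{(0)}(I_h\phi,\cdot)$ and recognize that Lemma~\ref{LemmaGP} controls it at the optimal order $h^{k+1}$. The $H^1$ bound is then an essentially mechanical inverse-estimate upgrade, and its suboptimal $h^{-1}\|\phi-\tilde\phi\|_{\Omega_h^p}$ term is unavoidable, since the $L^2$-based extension problem only controls the data in the $L^2$ sense on $\Omega_h^p$.
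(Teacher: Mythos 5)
Your argument is correct, and all three estimates go through as you describe; the one place worth double-checking, the consistency identity $a_h^{\text{ext}}(e_h,e_h) = (I_h\phi - \tilde \phi, e_h)_{\Omega_h^p} + s_h^{(0)}(I_h\phi, e_h)$ for $e_h = I_h\phi - \phi_h$, is exactly right, and your control of $s_h^{(0)}(I_h\phi,I_h\phi)^{1/2}$ via the seminorm triangle inequality together with \eqref{eqn_from_LO19_GP_continuity_estimate} and \eqref{eqn_from_LO19_GP_interpolation_estimate} is the key step. Your route differs from the paper's in the error decomposition: the paper introduces an auxiliary discrete solution $\hat \phi_h$ solving \eqref{eqn_def_GPExt_problem_L2} with the data $\tilde\phi$ replaced by the exact $\phi$, splits $\phi - \phi_h = (\phi - \hat\phi_h) + (\hat\phi_h - \phi_h)$, bounds the data-perturbation term $\hat\phi_h - \phi_h$ by reusing the stability estimate \eqref{stabest} applied to the difference problem, and then treats $\phi - \hat\phi_h$ by a C\'ea-type argument with $I_h\phi$ and the Galerkin-type relation $a_h^{\text{ext}}(\phi - \hat\phi_h,\psi_h) = s_h^{(0)}(\phi,\psi_h)$. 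You instead skip the auxiliary function entirely and run a single first-Strang-lemma-type estimate on $I_h\phi - \phi_h$ that absorbs both the data error $\|\phi - \tilde\phi\|_{\Omega_h^p}$ and the ghost-penalty consistency error at once. Your version is slightly more compact (one fewer auxiliary object, one triangle inequality saved); the paper's two-stage split makes the roles of stability and approximation more transparent and yields the intermediate bounds that are then recycled verbatim in the $H^1$-projection analysis of Theorem~\ref{thm_error_GPExt_H1}. Your stability derivation by expanding $\|\tilde\phi - \phi_h\|_{\Omega_h^p}^2 \ge 0$ is algebraically equivalent to the paper's Young-inequality step, and your $H^1$ upgrade by the inverse inequality matches the paper's treatment of the corresponding terms.
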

\begin{proof}
    Let $\phi_h \in V_h(\Omega_h^e)$ be the solution of problem \eqref{eqn_def_GPExt_problem_L2}.  The stability estimate follows from
    \begin{equation*}
        \|\phi_h\|_a^2= a_h^{\rm ext}(\phi_h,\phi_h) = (\tilde \phi, \phi_h)_{\Omega_h^p} \leq \|\tilde \phi\|_{\Omega_h^p}\|\phi_h\|_{\Omega_h^p} \leq \frac12 \|\tilde \phi\|_{\Omega_h^p}^2 + \frac12 \left( \|\phi_h\|_a^2- s_h^{(0)}(\phi_h,\phi_h)\right).
    \end{equation*}
Let $\hat \phi_h \in V_h(\Omega_h^e)$ be such that
\[
  a_h^{\rm ext}(\hat \phi_h, \psi_h)= (\phi, \psi_h)_{\Omega_h^p} \quad \text{for all}~\psi_h \in V_h(\Omega_h^e).
\]
Hence, $\| \phi - \phih \|_{\Omega_h^e} \leq \| \phi_h - \hat \phi_h \|_{\Omega_h^e}+\|\phi - \hat \phi_h \|_{\Omega_h^e}$. We derive bounds for the two terms on the right-hand side. For the first one we use \eqref{eqn_GPExt_L2_elliptic}, $a_h^{\rm ext}(\phi_h-\hat \phi_h,\psi_h)=(\tilde \phi -\phi,\psi_h)_{\Omega_h^p}$ and the stability bound yielding
\begin{equation} \label{h1}
 \|\phi_h - \hat \phi_h \|_{\Omega_h^e} \lesssim \|\phi_h - \hat \phi_h \|_{a}  \leq \|\tilde \phi - \phi\|_{\Omega_h^p}.
\end{equation}
To bound the second term, we note that
 \begin{equation} \label{h2}
        \| \phi - \hat \phi_h \|_{\Omega_h^e} \leq  \|\phi - I_h \phi \|_{\Omega_h^e} + \|I_h \phi - \hat \phi_h \|_{\Omega_h^e} \lesssim h^{k+1} \|\phi \|_{H^{k+1}(\Omega_h^e)} + \|I_h \phi - \hat \phi_h \|_{\Omega_h^e}.
    \end{equation}
Using
    \begin{equation}\label{eqn_proof_GPExt_Galerkin}
        a_h^\text{ext}(\phi - \hat \phi_h, \psih) = s_h^{(0)}(\phi, \psih) \quad \text{for all } \psih \in V_h,
    \end{equation}
     we get
    \begin{math*}[RCLCL]
        && \|I_h \phi - \hat \phi_h \|_{a}^2 =  a_h^\text{ext}(I_h \phi - \phi, I_h \phi - \hat \phi_h) + a_h^\text{ext}(\phi - \hat \phi_h, I_h\phi - \hat \phi_h) \\[1ex]
        &\overset{\eqref{eqn_proof_GPExt_Galerkin}}{=}& a_h^\text{ext}(I_h \phi - \phi, I_h \phi - \hat \phi_h) + s_h^{(0)}(\phi, I_h\phi - \hat \phi_h) \leq \| I_h \phi - \phi\|_a \| I_h \phi - \hat \phi_h \|_a + \sqrt{s_h^{(0)}(\phi, \phi)} \| I_h\phi - \hat \phi_h \|_a.
    \end{math*}
Reducing by $ \| I_h\phi - \hat \phi_h \|_a$ we get
\begin{equation}\label{aux380}
\|I_h \phi - \hat \phi_h \|_{a} \leq  \| I_h \phi - \phi\|_a + \sqrt{s_h^{(0)}(\phi, \phi)}.
\end{equation}
With the help of \eqref{eqn_from_LO19_GP_interpolation_estimate} and \eqref{eqn_from_LO19_GP_continuity_estimate} we obtain
    \begin{equation} \label{h3} 
        \|I_h \phi - \hat \phi_h \|_{a} \leq \| I_h \phi - \phi\|_{\Omega_h^p} + \sqrt{s_h^{(0)}(I_h \phi - \phi, I_h \phi - \phi)} + \sqrt{s_h^{(0)}(\phi, \phi)}
        \lesssim  h^{k+1} \|\phi\|_{H^{k+1}(\Omega_h^e)}. 
    \end{equation}
    Combining this estimate with the results in \eqref{h1} and \eqref{h2} yields the bound \eqref{err1}. For the estimate in the $H^1$-norm we use the triangle inequality $\|\phi- \phi_h\|_{1,\Omega_h^e} \leq \|\phi- \hat \phi_h\|_{1,\Omega_h^e} + \|\hat \phi_h- \phi_h\|_{1,\Omega_h^e}$. For the first term we get, using an interpolation property of $I_h\phi$, a finite element inverse inequality, estimates \eqref{eqn_GPExt_L2_elliptic} and \eqref{h3},
    \begin{align*}
        \| \phi - \hat \phi_h \|_{1,\Omega_h^e}  & \leq \| \phi - I_h \phi  \|_{1,\Omega_h^e} + \| I_h \phi -  \hat \phi_h \|_{1,\Omega_h^e} \lesssim h^{k} \|\phi\|_{H^{k+1}(\Omega_h^e)} + h^{-1} \| I_h \phi - \hat \phi_h \|_{\Omega_h^e}  \\
        & \lesssim h^{k} \|\phi\|_{H^{k+1}(\Omega_h^e)} + h^{-1} \| I_h \phi - \hat \phi_h \|_a \lesssim h^{k} \|\phi\|_{H^{k+1}(\Omega_h^e)}.
    \end{align*}
For the second term we use \eqref{h1}:
\begin{equation*}
  \|\hat \phi_h- \phi_h\|_{1,\Omega_h^e} \lesssim h^{-1} \|\hat \phi_h- \phi_h\|_{\Omega_h^e} \lesssim h^{-1}\|\tilde \phi- \phi\|_{\Omega_h^p}.
\end{equation*}
Combining these results we get the bound \eqref{err2}.
\end{proof}

Now, we give an error analysis for the extension problem \eqref{eqn_def_GPExt_problem_H1}.

\begin{theorem}\label{thm_error_GPExt_H1}
    Let $\phi \in H^{k+1}(\Omega_h^e)$ be given and $\phih \in V_h(\Omega_h^e)$ the solution of \eqref{eqn_def_GPExt_problem_H1}. The following stability and error estimates hold
    \begin{align}
        \|\phih\|_{1,a} & \leq \left( \|\tilde \phi\|_{1,\Omega_h^p}^2-s_h^{(2)}(\phi_h,\phi_h)\right)^\frac12 \leq\|\tilde \phi\|_{1, \Omega_h^p} \label{stab2} \\ \label{error1}
        \| \phi - \phih \|_{1,\Omega_h^e} & \lesssim \|\phi - \tilde \phi\|_{1, \Omega_h^e} +   h^{k} \| \phi \|_{H^{k+1}(\Omega_h^e)}.
    \end{align}
\end{theorem}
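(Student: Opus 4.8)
The plan is to mirror the proof of Theorem~\ref{thm_error_GPExt_L2} verbatim in structure, replacing the $L^2$-inner product and the form $s_h^{(0)}$ throughout by the $H^1$-inner product and $s_h^{(2)}$, and to transfer the estimates of Lemma~\ref{LemmaGP} (which are stated only for $s_h^{(0)}$) to $s_h^{(2)}$ through the elementary scaling identity $s_h^{(2)}(\cdot,\cdot)=h^{-2}s_h^{(0)}(\cdot,\cdot)$, which is immediate from the definition~\eqref{eqn_def_GPExt_operator}. This single factor of $h^{-2}$ is what turns the $h^{k+1}$ order of the $L^2$-analysis into the $h^{k}$ order claimed here.

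For the stability bound~\eqref{stab2} I would test~\eqref{eqn_def_GPExt_problem_H1} with $\psi_h=\phi_h$, so that $\|\phi_h\|_{1,a}^2=a_{1,h}^\text{ext}(\phi_h,\phi_h)=(\tilde\phi,\phi_h)_{1,\Omega_h^p}$, then apply Cauchy--Schwarz and Young's inequality and use the identity $\|\phi_h\|_{1,\Omega_h^p}^2=\|\phi_h\|_{1,a}^2-s_h^{(2)}(\phi_h,\phi_h)$. Rearranging gives~\eqref{stab2} exactly as in the $L^2$ case.

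For the error estimate~\eqref{error1} I introduce the auxiliary solution $\hat\phi_h\in V_h(\Omega_h^e)$ defined by $a_{1,h}^\text{ext}(\hat\phi_h,\psi_h)=(\phi,\psi_h)_{1,\Omega_h^p}$ for all $\psi_h$, and split $\|\phi-\phi_h\|_{1,\Omega_h^e}\le\|\phi_h-\hat\phi_h\|_{1,\Omega_h^e}+\|\phi-\hat\phi_h\|_{1,\Omega_h^e}$. The first term is treated as in~\eqref{h1}: since $a_{1,h}^\text{ext}(\phi_h-\hat\phi_h,\psi_h)=(\tilde\phi-\phi,\psi_h)_{1,\Omega_h^p}$, the $H^1$-ellipticity~\eqref{eqn_GPExt_H1_elliptic}, Cauchy--Schwarz, and $\|\cdot\|_{1,\Omega_h^p}\le\|\cdot\|_{1,a}$ give $\|\phi_h-\hat\phi_h\|_{1,\Omega_h^e}\lesssim\|\tilde\phi-\phi\|_{1,\Omega_h^p}$. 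For the second term I insert the interpolant $I_h\phi$ and use the Galerkin relation $a_{1,h}^\text{ext}(\phi-\hat\phi_h,\psi_h)=s_h^{(2)}(\phi,\psi_h)$, the exact analog of~\eqref{eqn_proof_GPExt_Galerkin}, to obtain, as in~\eqref{aux380}, the estimate $\|I_h\phi-\hat\phi_h\|_{1,a}\le\|I_h\phi-\phi\|_{1,a}+\sqrt{s_h^{(2)}(\phi,\phi)}$.

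The only genuinely new bookkeeping is the evaluation of the $s_h^{(2)}$ terms. Via $s_h^{(2)}=h^{-2}s_h^{(0)}$ and~\eqref{eqn_from_LO19_GP_continuity_estimate} one gets $\sqrt{s_h^{(2)}(\phi,\phi)}\lesssim h^{-1}\cdot h^{k+1}\|\phi\|_{H^{k+1}(\Omega_h^e)}=h^{k}\|\phi\|_{H^{k+1}(\Omega_h^e)}$, and~\eqref{eqn_from_LO19_GP_interpolation_estimate} gives $s_h^{(2)}(\phi-I_h\phi,\phi-I_h\phi)\lesssim h^{2k}\|\phi\|_{H^{k+1}(\Omega_h^e)}^2$; together with the standard $H^1$-interpolation bound this yields $\|I_h\phi-\phi\|_{1,a}\lesssim h^{k}\|\phi\|_{H^{k+1}(\Omega_h^e)}$, hence $\|I_h\phi-\hat\phi_h\|_{1,a}\lesssim h^{k}\|\phi\|_{H^{k+1}(\Omega_h^e)}$. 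A last application of~\eqref{eqn_GPExt_H1_elliptic} and the triangle inequality $\|\phi-\hat\phi_h\|_{1,\Omega_h^e}\le\|\phi-I_h\phi\|_{1,\Omega_h^e}+\|I_h\phi-\hat\phi_h\|_{1,\Omega_h^e}$ bounds the second term by $h^{k}\|\phi\|_{H^{k+1}(\Omega_h^e)}$, and collecting the two contributions gives~\eqref{error1} (with the data term naturally measured on $\Omega_h^p$). No real obstacle is expected beyond this scaling accountancy; the one point to watch is to pair $s_h^{(2)}$ with $\|\cdot\|_{1,a}$ \emph{via its own} Cauchy--Schwarz inequality and the bound $s_h^{(2)}(\psi_h,\psi_h)\le\|\psi_h\|_{1,a}^2$ when estimating the mixed term $s_h^{(2)}(\phi,I_h\phi-\hat\phi_h)$.
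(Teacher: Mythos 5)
Your proposal is correct and follows essentially the same route as the paper: the same auxiliary solution $\hat\phi_h$, the same triangle-inequality split, the same Galerkin relation and interpolation argument, with the scaling $s_h^{(2)}=h^{-2}s_h^{(0)}$ (which the paper leaves implicit) used to transfer the bounds of Lemma~\ref{LemmaGP}. Your observation that the data term can be measured on $\Omega_h^p$ rather than $\Omega_h^e$ matches the paper's own intermediate estimate and is in fact the slightly sharper form of \eqref{error1}.
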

\begin{proof}
The proof uses the same arguments as the proof of Theorem~\ref{thm_error_GPExt_L2}.
We define $\hat \phi_h$  as the solution of \eqref{eqn_def_GPExt_problem_H1} with $\tilde \phi$ replaced by $\phi$.
Similar to \eqref{h1} we obtain
\[
  \|\phi_h - \hat \phi_h\|_{1, \Omega_h^e} \lesssim \|\phi_h - \hat \phi_h\|_{1,a} \leq
  \|\tilde \phi - \phi\|_{1, \Omega_h^p}.
\]
Applying the triangle inequality, an interpolation property of $I_h \phi$ and \eqref{eqn_GPExt_H1_elliptic}, we obtain the estimate:
    \begin{equation*}
        \|\phi-\hat \phi_h\|_{1, \Omega_h^e} \leq \|\phi - I_h \phi\|_{1, \Omega_h^e} + \|I_h \phi-\hat  \phi_h\|_{1, \Omega_h^e} \lesssim h^{k} \|\phi\|_{H^{k+1}(\Omega_h^e)} + \|I_h \phi- \hat \phi_h\|_{1,a}.
    \end{equation*}
 Repeating the estimates \eqref{eqn_proof_GPExt_Galerkin}--\eqref{aux380} with $a_h^\text{ext}$ and $\|\cdot\|_a$ replaced by $a_{1,h}^\text{ext}$ and $\|\cdot\|_{1,a}$, respectively, we obtain
    \begin{equation*}
        \|I_h \phi-\hat \phi_h\|_{1,a} \leq \|I_h \phi-\phi\|_{1,a} + \sqrt{s_h^{(2)}(\phi, \phi)}.
    \end{equation*}
 With the help of  \eqref{eqn_from_LO19_GP_continuity_estimate} and \eqref{eqn_from_LO19_GP_interpolation_estimate} we obtain
    \begin{math*}
        \|I_h \phi-\hat \phi_h\|_{1,a} &\leq& \|I_h \phi-\phi\|_{1, \Omega_h^p} + \sqrt{s_h^{(2)}(I_h\phi - \phi, I_h\phi - \phi)} + \sqrt{s_h^{(2)}(\phi, \phi)} \\
        &\leq& h^{k} \|\phi\|_{H^{k+1}(\Omega_h^e)},
    \end{math*}
 and so the desired estimate \eqref{error1}.
\end{proof}


\begin{remark} \rm \label{rem_comparison_GPext}
    We compare the results of the extension using the $L^2$- and the $H^1$-projection. Let $\tilde\phi \in V_h(\Omega_h^e)$ be a given finite element approximation of $\phi \in H^{k+1}(\Omega_h^e)$. We denote the numerical solutions of \eqref{eqn_def_GPExt_problem_L2} and \eqref{eqn_def_GPExt_problem_H1} with this $\tilde\phi$ by $\phi_h, \phi_{1,h} \in V_h(\Omega_h^e)$, respectively and the errors as $e_h := \tilde\phi - \phih$ and $e_{1,h} := \tilde\phi - \phi_{1,h}$. Note that $e_h, e_{1,h} \in V_h(\Omega_h^e)$.  It holds
    \begin{equation}\label{eqn_remark_comparison_gpext_aux}
        a_h^\text{ext}(e_h, \psi_h) = s^{(0)}(\tilde\phi, \psih) = h^2 s^{(2)}(\tilde\phi, \psih) = h^2 a^\text{ext}_{1,h}(e_{1,h}, \psih), \quad \text{for all } \psih \in V_h(\Omega_h^e).
    \end{equation}
    Using an inverse inequality we get $
            h^2 a_{1,h}^\text{ext}(w_h, w_h) \leq c\, a_h^\text{ext}(w_h, w_h),
        $
        for all $w_h \in V_h(\Omega_h^e)$.
    Using $\psih = e_h$ in \eqref{eqn_remark_comparison_gpext_aux} we get \[
        \|e_h \|_a^2 = h^2 a_{1,h}^\text{ext}(e_{1,h}, e_h) \leq c\, \|e_{1,h}\|_a \| e_h\|_a
    \]
    and thus $\|\tilde \phi - \phi_h\|_a \leq c\, \|\tilde \phi - \phi_{1,h}\|_a$ holds.
    Hence, measured in the energy norm $\|\cdot\|_a$, apart from a constant $c$, the error using the $L^2$-projection is always smaller than the error using the $H^1$-projection.   \\
\end{remark}

We present results of numerical experiments with the extension method that confirm the optimal approximation properties in section~\ref{secNumExtension}. The numerical experiments also show that there are no significant differences between the methods based on $L^2$- and $H^1$-projection.

\subsection{Long-time behavior.}

We also examine another stability aspect of the extension method. Since the extension procedure is applied in \emph{each} time step of the narrow band algorithm (cf. Section~\ref{sec_transport_overview}), it is important to assess its behavior when applied repeatedly ($n$ times). If $n$ is large, this provides insight into the long-term behavior of the method.

We restrict the discussion here to the $L^2$-projection. Similar results hold for the method based on $H^1$-projection.

For a smooth given $\phi$, consider  $\phi_h^0:=I_h \phi$ and define a sequence of level set function approximations $(\phi_h^n)_{n \in \N}$ as follows: Given $\phi_h^{n} \in V_h(\Omega_h^p)$ define $\phi_h^{n+1} \in V_h(\Omega_h^e)$ as the solution to the problem:
\begin{equation} \label{repext}
	a_h^{\rm ex}(\phi_h^{n+1}, \psi_h) =(\phi_h^{n+1}, \psi_h)_{\Omega_h^p} + s_h^{(0)}(\phi_h^{n+1},\psi_h)=(\phi_h^{n}, \psi_h)_{\Omega_h^p} \quad \text{for all}~\psi_h \in V_h(\Omega_h^e), \quad n=0,1,2, \ldots
\end{equation}
To ease the notation we drop the subscript $h$ below and write $\phi^n=\phi_h^n$,  $s(\cdot,\cdot)$ instead of $s_h^{(0)}(\cdot,\cdot)$, and  $\|\psi\|_s:=s(\psi,\psi)^\frac12$.

Note the following  identities:
\begin{align}
	(\phi^{n+1}- \phi^n,\psi)_{\Omega_h^p} +s(\phi^{n+1}, \psi) =0 \quad \text{and}~\quad
	(\phi^{n}- \phi^0,\psi)_{\Omega_h^p} &= - \sum_{k=1}^n s(\phi^k,\psi)\quad  \label{basic2}
\end{align}
which hold for all $\psi \in V_h(\Omega_h^e)$. The following lemma proves two main long-time stability results

\begin{lemma} \label{LemmaRepeated}
	 The following bounds hold with a constant $c$ independent of $n$ and $h$:
	\begin{align}
		\|\phi^n\|_{\Omega_h^p} & \leq \|\phi^0\|_{\Omega_h^p}, \label{Est1} \\
		\|\phi^n - \phi^0\|_{\Omega_h^p} &\leq c  \min\{1,\sqrt{n} h^{k+1}\} \|\phi\|_{H^{k+1}(\Omega_h^e)}.
		\label{Est3} 
	\end{align}
\end{lemma}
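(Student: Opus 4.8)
The plan is to establish the two bounds using the variational identities in \eqref{basic2}, which encode the telescoping structure of the repeated extension. The key observation is that each step is an $a_h^{\rm ex}$-orthogonal projection of $\phi^n$ onto a subspace in the sense encoded by \eqref{repext}, so the $L^2(\Omega_h^p)$-norm should be non-increasing and the cumulative stabilization energy $\sum_k s(\phi^k,\phi^k)$ should be controllable.

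For the stability bound \eqref{Est1}, I would test \eqref{repext} with $\psi = \phi^{n+1}$ to obtain
\begin{equation*}
  \|\phi^{n+1}\|_{\Omega_h^p}^2 + s(\phi^{n+1},\phi^{n+1}) = (\phi^n,\phi^{n+1})_{\Omega_h^p} \leq \|\phi^n\|_{\Omega_h^p}\,\|\phi^{n+1}\|_{\Omega_h^p}.
\end{equation*}
Since $s(\phi^{n+1},\phi^{n+1}) \geq 0$, dropping that nonnegative term and dividing by $\|\phi^{n+1}\|_{\Omega_h^p}$ gives $\|\phi^{n+1}\|_{\Omega_h^p} \leq \|\phi^n\|_{\Omega_h^p}$, and \eqref{Est1} follows by induction. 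This same computation yields the sharper energy identity $\|\phi^{n+1}\|_{\Omega_h^p}^2 + s(\phi^{n+1},\phi^{n+1}) \leq \|\phi^n\|_{\Omega_h^p}\|\phi^{n+1}\|_{\Omega_h^p} \leq \tfrac12\|\phi^n\|_{\Omega_h^p}^2 + \tfrac12\|\phi^{n+1}\|_{\Omega_h^p}^2$, so that $\|\phi^{n+1}\|_{\Omega_h^p}^2 + 2s(\phi^{n+1},\phi^{n+1}) \leq \|\phi^n\|_{\Omega_h^p}^2$. Summing this telescoping inequality over $k=0,\dots,n-1$ controls the total stabilization energy: $2\sum_{k=1}^n s(\phi^k,\phi^k) \leq \|\phi^0\|_{\Omega_h^p}^2$, a uniform-in-$n$ bound that will drive the $\min$ in \eqref{Est3}.

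For \eqref{Est3}, the two cases require different arguments. For the $\sqrt{n}\,h^{k+1}$ branch I would use the second identity in \eqref{basic2}, testing with $\psi = \phi^n - \phi^0$, giving $\|\phi^n-\phi^0\|_{\Omega_h^p}^2 = -\sum_{k=1}^n s(\phi^k,\phi^n-\phi^0)$; applying Cauchy–Schwarz in the $s$-seminorm yields $\|\phi^n-\phi^0\|_{\Omega_h^p}^2 \leq \big(\sum_{k=1}^n \|\phi^k\|_s\big)\,\|\phi^n-\phi^0\|_s$. The seminorm $\|\phi^n-\phi^0\|_s$ must then be bounded by $\|\phi^n-\phi^0\|_{\Omega_h^p}$ via a continuity/inverse-type estimate on the finite element functions, and the factor $\sum_{k=1}^n\|\phi^k\|_s$ is handled by Cauchy–Schwarz in $k$ together with the summed energy bound $\sum_{k=1}^n \|\phi^k\|_s^2 \lesssim \|\phi^0\|_{\Omega_h^p}^2$, producing the $\sqrt{n}$ factor, while the extra $h^{k+1}$ comes from relating $\|\phi^0\|_s = \|I_h\phi\|_s$ to $h^{k+1}\|\phi\|_{H^{k+1}(\Omega_h^e)}$ through \eqref{eqn_from_LO19_GP_continuity_estimate} and \eqref{eqn_from_LO19_GP_interpolation_estimate}. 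For the $n$-independent branch (the constant $1$ in the $\min$), I would compare $\phi^n$ directly against $\phi$: write $\|\phi^n - \phi^0\|_{\Omega_h^p} \leq \|\phi^n - \phi\|_{\Omega_h^p} + \|\phi - I_h\phi\|_{\Omega_h^p}$ and bound the first term using the single-step error estimate \eqref{err1} of Theorem~\ref{thm_error_GPExt_L2}, applied with $\tilde\phi = \phi^{n-1}$, together with the stability bound \eqref{Est1} to control $\|\phi - \phi^{n-1}\|_{\Omega_h^p}$ uniformly in $n$.

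The main obstacle I anticipate is the bookkeeping in the $\sqrt{n}\,h^{k+1}$ estimate: one must cleanly separate the $\sqrt{n}$ growth (from summing $n$ stabilization contributions under Cauchy–Schwarz) from the $h^{k+1}$ smallness (from the interpolation accuracy of $\phi^0 = I_h\phi$), and ensure the continuity estimate bounding $\|\cdot\|_s$ by the $L^2(\Omega_h^p)$-norm on discrete functions does not secretly reintroduce an $h$-dependence that spoils the rate. The telescoping energy bound from the stability argument is the crucial ingredient that makes the $\sqrt{n}$ factor appear with an $h$-uniform constant rather than something that degenerates as the mesh is refined.
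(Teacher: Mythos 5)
Your proof of \eqref{Est1} is exactly the paper's argument, and your handling of the constant branch of the $\min$ in \eqref{Est3} works (though the paper gets it more directly from the triangle inequality $\|\phi^n-\phi^0\|_{\Omega_h^p}\le 2\|\phi^0\|_{\Omega_h^p}=2\|I_h\phi\|_{\Omega_h^p}\lesssim\|\phi\|_{H^{k+1}(\Omega_h^e)}$, without invoking Theorem~\ref{thm_error_GPExt_L2}). The gap is in the $\sqrt{n}\,h^{k+1}$ branch. Your only control on the stabilization terms is the telescoped energy bound $\sum_{k=1}^n\|\phi^k\|_s^2\le\tfrac12\|\phi^0\|_{\Omega_h^p}^2$, which is $O(1)$ in $h$; nowhere in your chain does $\|\phi^0\|_s$ actually enter, so the claim that ``the extra $h^{k+1}$ comes from relating $\|\phi^0\|_s=\|I_h\phi\|_s$ to $h^{k+1}\|\phi\|_{H^{k+1}}$'' is not connected to the estimate you set up. Moreover, the auxiliary continuity estimate you need, $\|\phi^n-\phi^0\|_s\lesssim\|\phi^n-\phi^0\|_{\Omega_h^p}$, is false in general: $s_h^{(0)}$ measures the function on patches throughout $\Omega_h^e$, including elements outside $\Omega_h^p$, and a discrete function supported in $\Omega_h^\text{diff}$ has zero $L^2(\Omega_h^p)$-norm but nonzero $\|\cdot\|_s$ (indeed \eqref{eqn_from_LO19_GP_L2_estimate} goes in the opposite direction). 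Even granting it, your route would yield $\|\phi^n-\phi^0\|_{\Omega_h^p}\lesssim\sqrt{n}\,\|\phi^0\|_{\Omega_h^p}$, with no $h^{k+1}$ gain.

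The missing idea is the \emph{monotonicity of the stabilization seminorm along the iteration}. Testing the difference identity $(\phi^{n+1}-\phi^n,\psi)_{\Omega_h^p}+s(\phi^{n+1},\psi)=0$ with $\psi=\phi^{n+1}-\phi^n$ gives
\begin{equation*}
\|\phi^{n+1}-\phi^n\|_{\Omega_h^p}^2+s(\phi^{n+1},\phi^{n+1})=s(\phi^{n+1},\phi^n)\le\tfrac12\|\phi^{n+1}\|_s^2+\tfrac12\|\phi^n\|_s^2,
\end{equation*}
hence $\|\phi^{n+1}\|_s\le\|\phi^n\|_s\le\|\phi^0\|_s\lesssim h^{k+1}\|\phi\|_{H^{k+1}(\Omega_h^e)}$ by \eqref{eqn_from_LO19_GP_continuity_estimate}--\eqref{eqn_from_LO19_GP_interpolation_estimate}. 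With this, the second identity in \eqref{basic2} tested with $\psi=\phi^n-\phi^0$ gives $\|\phi^n-\phi^0\|_{\Omega_h^p}^2\le\sum_{k=1}^n\|\phi^k\|_s\,(\|\phi^n\|_s+\|\phi^0\|_s)\le 2n\|\phi^0\|_s^2$, which is where both the $\sqrt{n}$ and the $h^{k+1}$ come from simultaneously. You need this per-step bound $\|\phi^k\|_s\le\|\phi^0\|_s$; the summed-energy bound is too weak to substitute for it.
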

\begin{proof}
Letting $\psi=\phi^{n+1}$ in \eqref{repext} we obtain
\begin{equation*}
	\|\phi^{n+1}\|_{\Omega_h^p}^2 + \|\phi^{n+1}\|_{s}^2 \leq \|\phi^{n}\|_{\Omega_h^p}\|\phi^{n+1}\|_{\Omega_h^p} \leq \tfrac12 \|\phi^{n}\|_{\Omega_h^p}^2 +\tfrac12 \|\phi^{n+1}\|_{\Omega_h^p}^2,
\end{equation*}
which yields \eqref{Est1}. If we let  $\psi= \phi^{n+1}-\phi^n$ in the first identity in \eqref{basic2}, we get
\begin{equation*}
	\|\phi^{n+1}- \phi^n \|_{\Omega_h^p}^2+ s(\phi^{n+1}, \phi^{n+1})= s(\phi^{n+1}, \phi^{n}) \leq \tfrac12\|\phi^{n+1}\|_s^2 + \tfrac12\|\phi^{n}\|_s^2.
\end{equation*}
This yields $\|\phi^{n+1}\|_s \leq \|\phi^{n}\|_s$  and  so $\|\phi^n\|_{s}  \leq \|\phi^0\|_{s}$.  
Hence, properties \eqref{eqn_from_LO19_GP_continuity_estimate} and \eqref{eqn_from_LO19_GP_interpolation_estimate} imply
\begin{equation}\label{Est2}
	\|\phi^n\|_{s}  \leq \|\phi^0\|_s \leq \|I_h \phi - \phi\|_s + \|\phi\|_s \leq c h^{k+1} \|\phi\|_{H^{k+1}(\Omega_h^e)}.
\end{equation}
Using $\psi= \phi^n - \phi^0$ in \eqref{basic2} and $\|\phi^{n+1}\|_s \leq \|\phi^{n}\|_s$, we get
\begin{equation*}
	\|\phi^n - \phi^0\|_{\Omega_h^p}^2 \leq n \max_{1 \leq k \leq n} \|\phi^k\|_s (\|\phi^n\|_s +\|\phi^0\|_s) \leq 2n\|\phi^0\|_s^2.
\end{equation*}
Combining this bound with \eqref{Est2} yields \eqref{Est3} with $c\sqrt{n} h^{k+1} \|\phi\|_{H^{k+1}(\Omega_h^e)}$ at the right-hand side.
This can be complemented with $	\|\phi^n - \phi^0\|_{\Omega_h^p}  \leq 2 \|\phi^0\|_{\Omega_h^p}$, which follows from a triangle inequality and \eqref{Est1}. Finally, note that $\|\phi^0\|_{\Omega_h^p}=\|I_h \phi\|_{\Omega_h^p} \leq c \|\phi\|_{H^{k+1}(\Omega_h^e)}$ holds.
\end{proof}


The result in \eqref{Est3} is likely sharp in terms of dependence on $n$, since the  initial growth of $\|\phi^n - \phi^0\|_{\Omega_h^p}$ with $\sqrt{n}$ is observed in numerical experiments, cf. Section~\ref{secNumExtension}.

We note that the choice $\phi_h^0:=I_h \phi$ slightly simplified  the analysis, but  causes no loss of generality. Indeed, if $\tilde \phi^{n}$ is the sequence defined as in \eqref{repext}, with some general $\phi_h^0= \tilde \phi$, then using \eqref{stabest} we obtain that $\tilde \phi^n$ and $\phi^n$ stay uniformly in $n$ close in the following sense:
\begin{equation*}
    \|\tilde \phi^n -  \phi^n\|_a \leq \|\tilde \phi - I_h \phi\|_{\Omega_h^p} \leq \|\tilde \phi - \phi\|_{\Omega_h^p} + ch^{k+1} \|\phi\|_{H^{k+1}(\Omega_h^p)}.
\end{equation*}

\subsection{Numerical experiments with the extension method.} \label{secNumExtension}

We present results of numerical experiments with the ghost penalty extension method defined in  \eqref{eqn_def_GPExt_problem}. We choose the exact level set function as $\phi(\bx) := (x_1 - x_3^2)^2 + x_2^2 + x_3^2 - 1$. The corresponding zero level defines a so-called ``Kite''-geometry, which was also used in  \cite{BrandnerReusken2020, Dziuk1988}. See Figure \ref{fig_KiteToSphere_start} for a visualization of the surface. The computational domain is chosen as $\Omega = [-\frac53,\frac53]^3$ and the initial mesh is a uniform tetrahedral mesh with  mesh size $h_0 = 0.5$. In each refinement step we halve the mesh size. As in  the error analysis, we use the parameter values $\alpha = 0$ for the $L^2$-projection and $\alpha = 2$ for the $H^1$-projection. The coefficient $\gamma^\text{ext}$ is chosen as $\gamma^\text{ext} = 1$. We define the domain $\Omega_h^p$ as the elements cut by the surface, denoted by  $\cT_\G$, with two layers of neighbors added, i.e., $\Omega_h^p = \cN^2(\cT_\G)$. As the input function $\tilde \phi$, we take an Oswald-type interpolation of $\phi$ in $V_h(\Omega_h^p)$. For the polynomial degree in the finite element space $V_h$ we consider $k = 1$ or $k = 2$. The extension elements are chosen as one and two layers of neighbors of $\Omega_h^p$. Thus, it holds $\Omega_h^e = \cN^1(\Omega_h^p)$ or $\Omega_h^e = \cN^2(\Omega_h^p)$.

We investigate the $L^2$- and the $H^1$-errors for the $L^2$- and the $H^1$-projection. We present results for the errors
\begin{math*}[RCLRCL]
    \left(e_\text{ext}\right)^2 &:=& \oint_{\Omega_h^e} & \left(\phi - \phi_h^{L^2}\right)^2 \dx, \qquad \left(e^\nabla_\text{ext}\right)^2 &:=& \oint_{\Omega_h^e} \nabla\left(\phi - \phi_h^{L^2}\right)^2 \dx, \\
    \left(e_\text{ext}^1\right)^2 &:=& \oint_{\Omega_h^e} & \left(\phi - \phi_h^{H^1}\right)^2 \dx, \qquad \left(e^{1,\nabla}_\text{ext}\right)^2 &:=& \oint_{\Omega_h^e} \nabla\left(\phi - \phi_h^{H^1}\right)^2 \dx,
\end{math*}
where $\phih^{L^2}$ and $\phih^{H^1}$ denote the solutions of the ghost penalty extension using the $L^2$- or the $H^1$-projection, respectively. Note that the areas of the domains $\Omega_h^p$ and $\Omega_h^e$ depend on $h$. Therefore, for the errors we  use the scaled integrals given by $\oint_{\Omega_h^e} f \dx = \int_{\Omega_h^e} f \dx / \|1\|_{\Omega_h^e}^2$.

The results of the extension to one layer ($\Omega_h^e= \cN^1(\Omega_h^p)$) are shown in Figure \ref{fig_GPExtOneLayer} and for two layers ($\Omega_h^e= \cN^2(\Omega_h^p)$) in Figure \ref{fig_GPExtTwoLayers}. We observe convergence rates $\cO(h^2)$ for the $L^2$-error and $\cO(h)$ for the $H^1$-error in the case  $k = 1$. For $k = 2$ we observe convergence rates  $\cO (h^3)$ for the $L^2$-error and $\cO( h^2)$ for the $H^1$-error. These (optimal) orders of convergence are in agreement with the error analysis in Theorems \ref{thm_error_GPExt_L2} and \ref{thm_error_GPExt_H1}. Note, that the errors for the $L^2$-projection behave very similar to those of the  $H^1$-projection.
In this Kite example, the exact level set function is a polynomial of degree four. Choosing $k = 4$ in the extension method leads to errors close to the machine accuracy $(\sim 10^{-11})$ for the $L^2-$ and the $H^1-$projection.

\newcommand{\errorfilelayeroneorderone}{./errors/GPExtension_Kite/order1_LayersExt1_errors.dat}
\newcommand{\errorfilelayeroneordertwo}{./errors/GPExtension_Kite/order2_LayersExt1_errors.dat}

\begin{figure}[!ht]
    \centering
    \begin{subfigure}[b]{0.4\textwidth}
        \begin{tikzpicture}
            \pgfplotsset{legend style={at={(0.99,0.01)}, anchor=south east, legend columns=1, draw=none, fill=none},}
            \begin{axis}[domain={0.0305:0.55}, ymode=log, xmode=log]
                \addplot[red, mark=*] table[x=MeshSize, y=L2ProjL2OnExtElem]{\errorfilelayeroneorderone};
                \addlegendentry{\small $e_\text{ext}$}
                \addplot[blue, mark=*] table[x=MeshSize, y=H1ProjL2OnExtElem]{\errorfilelayeroneorderone};
                \addlegendentry{\small $e^1_\text{ext}$}
                \addplot[dotted, red, mark=*] table[x=MeshSize, y=L2Projh1OnExtElem]{\errorfilelayeroneorderone};
                \addlegendentry{\small $e^\nabla_\text{ext}$}
                \addplot[dotted, blue, mark=*] table[x=MeshSize, y=H1Projh1OnExtElem]{\errorfilelayeroneorderone};
                \addlegendentry{\small $e^{1, \nabla}_\text{ext}$}
                \addplot[dashed,line width=0.75pt]{3.2*x};
                \addlegendentry{\small $\cO(h)$}
                \addplot[dotted,line width=0.75pt]{1.6*x^2};
                \addlegendentry{\small $\cO(h^2)$}
            \end{axis}
        \end{tikzpicture}
        \caption{$k = 1$}
    \end{subfigure}
    \begin{subfigure}[b]{0.4\textwidth}
        \begin{tikzpicture}
            \pgfplotsset{legend style={at={(0.99,0.01)}, anchor=south east, legend columns=1, draw=none, fill=none},}
            \begin{axis}[domain={0.0305:0.55}, ymode=log, xmode=log]
                \addplot[red, mark=*] table[x=MeshSize, y=L2ProjL2OnExtElem]{\errorfilelayeroneordertwo};
                \addlegendentry{\small $e_\text{ext}$}
                \addplot[blue, mark=*] table[x=MeshSize, y=H1ProjL2OnExtElem]{\errorfilelayeroneordertwo};
                \addlegendentry{\small $e^1_\text{ext}$}
                \addplot[dotted, red, mark=*] table[x=MeshSize, y=L2Projh1OnExtElem]{\errorfilelayeroneordertwo};
                \addlegendentry{\small $e^\nabla_\text{ext}$}
                \addplot[dotted, blue, mark=*] table[x=MeshSize, y=H1Projh1OnExtElem]{\errorfilelayeroneordertwo};
                \addlegendentry{\small $e^{1, \nabla}_\text{ext}$}
                \addplot[dashed,line width=0.75pt]{1.8*x^2};
                \addlegendentry{\small $\cO(h^2)$}
                \addplot[dotted,line width=0.75pt]{1.0*x^3};
                \addlegendentry{\small $\cO(h^3)$}
            \end{axis}
        \end{tikzpicture}
        \caption{$k = 2$}
    \end{subfigure}
    \caption{Ghost penalty extension of the level set function on one extension layer, $\Omega_h^e = \cN^1(\Omega_h^p)$.} \label{fig_GPExtOneLayer}
\end{figure}

\newcommand{\errorfilelayertwoorderone}{./errors/GPExtension_Kite/order1_LayersExt2_errors.dat}
\newcommand{\errorfilelayertwoordertwo}{./errors/GPExtension_Kite/order2_LayersExt2_errors.dat}

\begin{figure}[!ht]
    \centering
    \begin{subfigure}[b]{0.4\textwidth}
        \begin{tikzpicture}
            \pgfplotsset{legend style={at={(0.99,0.01)}, anchor=south east, legend columns=1, draw=none, fill=none},}
            \begin{axis}[domain={0.0305:0.55}, ymode=log, xmode=log]
                \addplot[red, mark=*] table[x=MeshSize, y=L2ProjL2OnExtElem]{\errorfilelayertwoorderone};
                \addlegendentry{\small $e_\text{ext}$}
                \addplot[blue, mark=*] table[x=MeshSize, y=H1ProjL2OnExtElem]{\errorfilelayertwoorderone};
                \addlegendentry{\small $e^{1}_\text{ext}$}
                \addplot[dotted, red, mark=*] table[x=MeshSize, y=L2Projh1OnExtElem]{\errorfilelayertwoorderone};
                \addlegendentry{\small $e^\nabla_\text{ext}$}
                \addplot[dotted, blue, mark=*] table[x=MeshSize, y=H1Projh1OnExtElem]{\errorfilelayertwoorderone};
                \addlegendentry{\small $e^{1, \nabla}_\text{ext}$}
                \addplot[dashed,line width=0.75pt]{4.2*x};
                \addlegendentry{\small $\cO(h)$}
                \addplot[dotted,line width=0.75pt]{4.5*x^2};
                \addlegendentry{\small $\cO(h^2)$}
            \end{axis}
        \end{tikzpicture}
        \caption{$k = 1$}
    \end{subfigure}
        \begin{subfigure}[b]{0.4\textwidth}
        \begin{tikzpicture}
            \pgfplotsset{legend style={at={(0.99,0.01)}, anchor=south east, legend columns=1, draw=none, fill=none},}
            \begin{axis}[domain={0.0305:0.55}, ymode=log, xmode=log]
                \addplot[red, mark=*] table[x=MeshSize, y=L2ProjL2OnExtElem]{\errorfilelayertwoordertwo};
                \addlegendentry{\small $e_\text{ext}$}
                \addplot[blue, mark=*] table[x=MeshSize, y=H1ProjL2OnExtElem]{\errorfilelayertwoordertwo};
                \addlegendentry{\small $e^1_\text{ext}$}
               \addplot[dotted, red, mark=*] table[x=MeshSize, y=L2Projh1OnExtElem]{\errorfilelayertwoordertwo};
                \addlegendentry{\small $e^\nabla_\text{ext}$}
                \addplot[dotted, blue, mark=*] table[x=MeshSize, y=H1Projh1OnExtElem]{\errorfilelayertwoordertwo};
                \addlegendentry{\small $e^{1, \nabla}_\text{ext}$}
                \addplot[dashed,line width=0.75pt]{5.9*x^2};
                \addlegendentry{\small $\cO(h^2)$}
                \addplot[dotted,line width=0.75pt]{4.7*x^3};
                \addlegendentry{\small $\cO(h^3)$}
            \end{axis}
        \end{tikzpicture}
        \caption{$k = 2$}
    \end{subfigure}
    \caption{Ghost penalty extension of the level set function on two extension layers, $\Omega_h^e = \cN^2(\Omega_h^p)$.} \label{fig_GPExtTwoLayers}
\end{figure}

We now consider the repeated application of the extension method, cf. Lemma~\ref{LemmaRepeated} and the estimate \eqref{Est3}.   We start with the initial function $\phi_h^0=I_h \phi$ and determine $\phi_h^{n+1} \in V_h(\Omega_h^e)$ such that
\begin{equation*}
    (\phi_h^{n+1}, \psi_h)_{\Omega_h^p} + s_h^{(0)}(\phi_h^{n+1},\psi_h)=(\phi_h^{n}, \psi_h)_{\Omega_h^p} \quad \text{for all}~\psi_h \in V_h(\Omega_h^e), \quad n=0,1,2, \ldots
\end{equation*}
holds. As in the experiments above we use the Kite-geometry and choose $\Omega_h^p=\cN^2(\cT_\G)$ as the cut elements with two layers of neighbors added and  $\Omega_h^e = \cN^2(\Omega_h^p)$. We apply the extension method using the $L^2$-projection with $\alpha = 0$ and $\gamma^\text{ext} = 1$. The polynomial degrees in the finite element space are chosen as $k = 1$ and $k = 2$, and we present results for the  error quantities
\begin{equation*}
    e_{\Omega_h^p} = \frac{\|\phi_h^n - \phi_h^0 \|_{\Omega_h^p}}{\|1\|_{\Omega_h^p}}, \quad e_{\Omega_h^e} = \frac{\|\phi_h^n - \phi_h^0 \|_{\Omega_h^e}}{\|1\|_{\Omega_h^e}},
\end{equation*}

The results for fixed mesh size $h=2^{-4}$ and $0 \leq n \leq 1000$  are shown in Figure \ref{fig_GPExtIterativ}. For $k=1$ we observe for $e_{\Omega_h^p}$ a $\sqrt{n}$ dependence on $n$, consistent with \eqref{Est3}. For $k=2$ the dependence is slightly milder.

\newcommand{\errfileFixHOne}{./errors/GPExtension_Kite/iterativ_order1_errors.dat}
\newcommand{\errfileFixHTwo}{./errors/GPExtension_Kite/iterativ_order2_errors.dat}

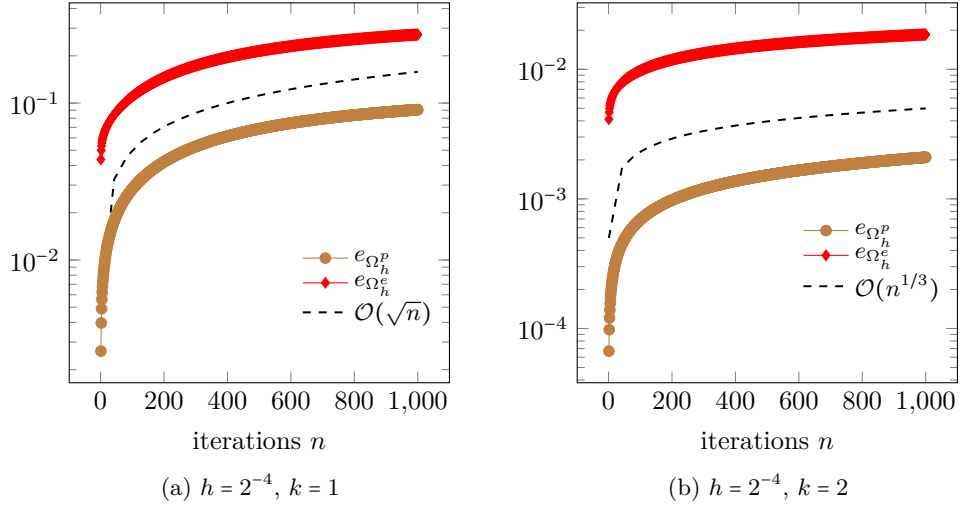
\begin{figure}[!ht]
    \centering
    \begin{subfigure}[b]{0.4\textwidth}
        \begin{tikzpicture}
            \pgfplotsset{legend style={at={(0.99,0.05)}, anchor=south east, legend columns=1, draw=none, fill=none, xlabel=iterations $n$},}
            \begin{axis}[domain={1:1000}, xmode=log, ymode=log]
                \addplot[brown, mark=*] table[x=Iterations, y=L2OnProjElem_Lx3]{\errfileFixHOne}; \addlegendentry{\small $e_{\Omega_h^p}$}
                \addplot[red, mark=diamond*] table[x=Iterations, y=L2OnExtElem_Lx3]{\errfileFixHOne}; \addlegendentry{\small $e_{\Omega_h^e}$}
                \addplot[black, dashed,line width=0.75pt]{0.005*x^0.5}; \addlegendentry{\small $\cO(\sqrt{n})$}
            \end{axis}
        \end{tikzpicture}
        \caption{$h = 2^{-4}$, $k = 1$}
    \end{subfigure}
    \begin{subfigure}[b]{0.4\textwidth}
        \begin{tikzpicture}
            \pgfplotsset{legend style={at={(0.99,0.05)}, anchor=south east, legend columns=1, draw=none, fill=none, xlabel=iterations $n$},}
            \begin{axis}[domain={1:1000},  xmode=log, ymode=log]
                \addplot[brown, mark=*] table[x=Iterations, y=L2OnProjElem_Lx3]{\errfileFixHTwo}; \addlegendentry{\small $e_{\Omega_h^p}$}
                \addplot[red, mark=diamond*] table[x=Iterations, y=L2OnExtElem_Lx3]{\errfileFixHTwo}; \addlegendentry{\small $e_{\Omega_h^e}$}
                \addplot[blue,  dashed, line width=0.75pt]{0.001*x^0.333}; \addlegendentry{\small $\cO(n^{1/3})$}
                \addplot[black, dashed, line width=0.75pt]{0.00015*x^0.5}; \addlegendentry{\small $\cO(\sqrt{n})$}
            \end{axis}
        \end{tikzpicture}
        \caption{$h = 2^{-4}$, $k = 2$}
    \end{subfigure}
    \caption{Fixed mesh size $h$ and varying iteration number $n$} \label{fig_GPExtIterativ}
\end{figure}

\section{A level set narrow band algorithm.}\label{sec_transport_whole_algo}

We have all ingredients ready for the narrow band level set transport as outlined in Algorithm~\ref{alg}. This section discusses some implementation aspects and presents more explanation on the method.

An important aspect of the method is the selection of the domains $\Omega_\G^n$ for solving the level set equation in the time interval $[t_n,t_{n+1}]$ and the subdomain $\Omega_h^{p,n}$ where the projection in the extension method \eqref{eqn_def_GPExt_problem} is performed. For simplicity, we assume the velocity field $\bu$ is given in a sufficiently large narrow band (cf. Remark~\ref{rem_extrapolation_vel_transport}).

Recall that the domain formed by the simplices cut by $\Gamma^n$ is denoted by $\cT_\G^n$. Since the exact zero level of the level set function is not available, $\cT_\G^n$ is determined using the finite element approximation $\phi_h(\cdot,t_n)$ of $\phi(\cdot,t_n)$. The narrow band domain $\Omega_{\Gamma}^n$ includes $\cT_\G^n$ plus a few layers of neighboring elements. In the experiments, three layers are added, so $\Omega_{\Gamma}^n = \cN^3(\cT_\G^n)$. The time step is chosen so that $\Gamma_h^{n+1}$ (the zero level of $\phi_h(\cdot,t_{n+1})$) is within $\Omega_{\Gamma}^n$ (see Fig.~\ref{fig_NarrowBand}). Thus, at each time step, the level set equation is solved on a narrow band domain of width $\sim ch$, with a constant $c$. The level set approximation at $t=t_{n+1}$, denoted by $\tphi_h^{n+1}$ and defined on $\Omega_\Gamma^n$, is then extended to $\phi_h^{n+1}$ defined on $\Omega_\Gamma^{n+1}$ using the extension method \eqref{eqn_def_GPExt_problem}.

Clearly the projection domain $\Omega_h^p = \Omega_h^{p, n+1}$ has to be a subdomain of both the current transport domain $\Omega_\G^n$ where $\tphi_h^{n+1}$ is given and the next transport domain $\Omega_\G^{n+1}$ where $\phi_h^{n+1}$ has to be defined. Thus, the maximal possible projection domain is $\Omega_h^{p,n+1} = \Omega_\G^n \cap \Omega_\G^{n+1}$. However, we use a  smaller projection domain. This choice of $\Omega_h^{p,n+1}$ addresses a key challenge in narrow band level set methods: obtaining accurate numerical boundary data on the inflow boundary. An inaccuracy of boundary data at $t=t_n$ on the inflow boundary $\partial \Omega_{\Gamma,in}^n$ can significantly affect the solution of the level set equation in $\Omega_\Gamma^n \times [t_n,t_{n+1}]$. However, due to the transport nature of the flow, such errors remain (essentially) on  the same level line. For instance, an error at a point $z$ on $\partial \Omega_{\Gamma,in}^n$ with $\phi(z,t_n)=ch$ is transported along a trajectory $z(t)$ with $\phi(z(t),t_n)=ch$ for $t \in [t_n,t_{n+1}]$. Therefore, these boundary errors do not enter the domain $\{z \in \Omega_\Gamma^n \mid |\phi(z,t_{n+1})| < ch\}$. Although this property does not hold exactly for the discrete approximation $\phi_h$, it is expected to hold approximately for an accurate discretization of the level set equation.

Motivated by this observation, we use a \emph{smaller} projection domain $\Omega_h^{p, n+1}$ than the maximal one $\Omega_\Gamma^n \cap \Omega_\Gamma^{n+1}$. Obvious candidates are $\cT_\G^{n+1}$ (the domain formed by elements cut by $\Gamma_h^{n+1}$), $\cN^1(\cT_{\Gamma}^{n+1})$ (the ``cut'' elements and their direct neighbors), or the smallest set of simplices containing the level set $h$-tube $\{x \in \R^d \mid |\tilde \phi_h^{n+1}(x)| \leq h\}$. In the numerical experiments, we use $\Omega_h^p = \cN^1(\cT_\G^{n+1})$, since this domain is independent of the scaling of $\phi_h$ and is easy to compute. Important effects of this choice of the projection domain are illustrated in numerical experiments in Section~\ref{sec_transport_experiments}.

Since we  use a narrow band with a width $\sim ch $, we have to adjust the time step size to ensure that the zero level of $\phi_h(\cdot,t_{n+1})$ is within the narrow band $\Omega_\G^n$. The normal velocity of the surface is given by $V_\G(x)$, $x \in \Gamma(t)$. Motivated by the maximal movement of the zero level set in one time step $(t_{n+1} - t_n) \max_{t \in [t_n, t_{n+1}]}{\|V_\G\|_{L^\infty(\Gamma(t))}}$, we use the time step size $\Delta t_n = \frac{1}{2^k} \frac{(j-1) h}{2 \|V_\G^n \|_{L^\infty}}$. Here, $k$ is the order of the used polynomials and $j$ denotes the number of layers added to $\cT_\G^n$ to define the narrow band.
In each time step we check that the projection domain $\Omega_h^{p,n+1} = \cN^1(\cT_\G^{n+1})$ is contained in the narrow band $\Omega_\G^n$. If this condition is violated, which happens only in very rare cases, we halve the time step size and redo the transport step. This procedure is repeated until the condition is satisfied.

The BDF schemes with variable time step size can be derived along the same lines as the formulas with a fix time step size. It is used and analyzed for example in \cite{AkrivisChenHanYuZhang2024, ByrneHindmarsh1987, HairerWanner1996, Rockswold1983}.

Summarizing we obtain Algorithm \ref{alg_levelset_transport}:

\begin{algorithm}[h!]
    \caption{Narrow band level set transport}{} \label{alg_levelset_transport}
	$\phih^0, \Omega_\Gamma^0 \gets $ Initialization\\
    \emph{while t < T do}:
    \begin{enumerate}
    	\item $\phi_D^n \gets$ determine boundary data  on $\partial \Omega_{\Gamma,in}^n$, e.g. \eqref{bnd_data_BDF2_lo} for BDF2
    	\item $\Delta t_n \gets$ calculate time step size (and $t_{n+1}$)
	    \item $\tphih \gets$ perform transport step on $\Omega_\Gamma^n \times [t_n,t_{n+1}]$ using \eqref{eqn_discrete_levelset_transport} and project into $V_h(\Omega_\Gamma^n)$
    	\item $\Omega_\Gamma^{n+1} \gets $ new narrow band with few layers around $\tilde \Gamma^{n+1} = \{\tphih = 0\}$
	    \item $\Omega_h^{p,n+1} \gets \cN^1(\cT_{\tilde\G}^{n+1})$
	    \item if $\Omega_h^{p,n+1} \not\subset \Omega_\Gamma^n$ then $\Delta t_n \gets \Delta t_n/2$ and go to step 3
	    \item $\phih^{n+1} \gets$  extension step \eqref{eqn_def_GPExt_problem_L2} of $\tphih$ to $\Omega_\G^{n+1}$
	\end{enumerate}
\end{algorithm}

Note that in this approach, we do \emph{not} use any re-initialization of the level set function. In numerical experiments (see Section~\ref{sec_transport_experiments}), we observed that for cases with smoothly evolving level sets, our approach appears to work satisfactorily even without re-initialization. An important feature of this approach is that higher-order approximations can be easily achieved by using a higher-order time and space discretization method for the level set equation and a higher-order finite element space in the extension method.
Although no rigorous error analysis of the entire algorithm is available yet, rigorous error bounds for the three components (numerical boundary data, space and time discretization of the level set equation, and the extension method) that form the algorithm have been derived.

\begin{remark}\label{rem_extrapolation_vel_transport} \rm
In this paper, we assume the flow field $\bu$ is known in the narrow band, but this is usually not the case in practice. If the velocity field is determined by a partial differential equation on
$\Gamma(t)$ 
and TraceFEM is used for discretization, then an approximation of the velocity field is known in a narrow band, with the width chosen in TraceFEM. In such cases, there is a strong coupling between the PDE on the surface (which determines $\bu$) and the level set equation (which depends on $\bu$ and determines the surface location). To obtain a reasonable approximation of $\bu^{n+1}$ for use in \eqref{eqn_discrete_levelset_transport}, one can extrapolate the velocity in time based on previous time steps, as in \cite{elliott2022numerical}. If the velocity is defined in a narrow band around the surface (e.g., from TraceFEM), the extension method \eqref{eqn_def_GPExt_problem} can extend it to a wider band, or an implicit extension can be used as suggested in \cite{lehrenfeld2018stabilized}.
\end{remark}

\section{Numerical experiments with narrow band algorithm} \label{sec_transport_experiments}

We give some results of numerical experiments to illustrate the performance of the narrow band level set algorithm explained in Section~\ref{sec_transport_whole_algo}.  All examples are implemented in NGSolve/netgen, cf. \cite{ngsolve} with the add-on ngsxfem, cf. \cite{ngsxfem}.

We give numerical results for the level set equation \eqref{LS3} on narrow bands in  2D and 3D. The time interval is taken as $t \in [0, 1]$ if not stated otherwise. We use the BDF$2$ scheme in time and first order polynomials ($k=1$) in space or a BDF$3$ method combined with second order polynomials ($k=2$). To get sufficiently good initial values for the BDF$2$ or BDF$3$ scheme, we initialize $\phi_h^0$ as an interpolation of the exact level set function and start with some BDF$1$ time steps with a sufficiently small time step size.

In case of the BDF2 method, for the inflow boundary condition we use \eqref{bnd_data_BDF2_lo}. If BDF3 is used, we apply an analogous time extrapolation using the values at  $t\in\{t_n,t_{n-1},t_{n-2}\}$.

If not stated otherwise, the narrow band scheme is applied on the cut elements with three layers of neighboring elements added, $\Omega_\Gamma^n := \mathcal{N}^3(\mathcal{T}_\Gamma^n)$ and for the projection domain in the extension method, we use $\Omega_h^{p, n+1} = \mathcal{N}^1(\mathcal{T}_\Gamma^{n+1})$. At each time step, we check that the projection elements for the next time step are contained in $\Omega_\Gamma^n$. For the ghost penalty extension, we use the $L^2$-projection with $\alpha = 0$ and $\gamma^\text{ext} = 1$. In each time step, the exact velocity $\bu$ is interpolated into a continuous finite element space of degree $k$ on the narrow band $\Omega_\Gamma^n$.

To evaluate the accuracy of the method, we track the following error metrics
\begin{equation*}
    |e_\G|^2 := \sum_{n = 1}^N \Delta t_n \oint_{\Gh^n} (\phi^n)^2 \dx, \qquad e_\G^\infty = \max_{1\leq n \leq N} \|\phi^n\|_{L^\infty(\G_h^n)}, \qquad |e_{L^2}|^2 := \sum_{n = 1}^N \Delta t_n \oint_{\Omega_\G^n} \left(\phi^n - \phi_h^n\right)^2 \dx,
\end{equation*}
where $\|\phi^n\|_{L^\infty(\G_h^n)}$ is an approximation of the maximal (absolute) value of the exact level set function $\phi^n$ on $\G_h^n$. Recall that $\oint_{\Omega_\G^n}$ stands for $|\Omega_\G^n|^{-1}\int_{\Omega_\G^n}$. Note, that the integrals over $\G_h^n$ cannot be computed exactly when using polynomials of degree $k > 1$, and standard quadrature rules cannot be applied. For this reason, we use the so-called parametric approach described for example in \cite{Lehrenfeld2017, GrandeLehrenfeldReusken2018} to approximate the error quantities with sufficiently high accuracy. The quantities $e_\G$ and $e_\G^\infty$ are a measure for the approximation error in $\Gamma_h^n \approx \Gamma(t_n)$.

We  present results for the following test cases:
\begin{itemize}
    \item \emph{Deforming kite}. We consider the deformation of a kite geometry to a sphere in 3D, kite($3D$), and the deformation of a kite geometry to a circle in 2D, kite($2D$), cf. Figure~\ref{fig_KiteToSphere}. We present results for the error quantities for BDF2 with $k=1$ and  BDF3 with $k=2$. Furthermore, for kite($2D$) with BDF2 and $k=1$ we illustrate the effect of different inflow boundary data.
    \item \emph{Ball in a rotating flow field}. We consider a sphere (3D) or a circle (2D) that makes a full rotation, cf. Figure~\ref{fig_TravelingSphere}. We present results for the error quantities for BDF2 with $k=1$ and  BDF3 with $k=2$.  For the 2D case we show the effect of a ``too large'' projection domain. For the 3D case we study volume conservation properties.
    \item \emph{Strongly deforming sphere}. As a more challenging and less smooth example, we consider a sphere that is stretched to a thin tube by a vortex flow field. We study different configurations and present accuracy results for BDF2 with $k=1$ and  BDF3 with $k=2$.
\end{itemize}

\subsection{Deforming kite (2D, 3D)}

The exact level set function of the kite is given by $\phi(x, t=0) = (x_1 + x_2^2)^2 + x_2^2 - 1$ in 2D and $\phi(x, t=0) = (x_1 - x_3^2)^2 + x_2^2 + x_3^2 - 1$ in 3D. We use a convex combination to deform the kite into a circle (or a sphere) with level set function $\phi(x, t=1) = x_1^2 + x_2^2 - 1$, or $\phi(x, t) = x_1^2 + x_2^2 + x_3^2 - 1$ in 2D and 3D, respectively. The geometry evolution for the 3D case is illustrated in Figure \ref{fig_KiteToSphere}.

\begin{figure}[ht!]
    \centering
    \begin{subfigure}[b]{0.25\textwidth}
        \centering
        \includegraphics[width=\textwidth]{./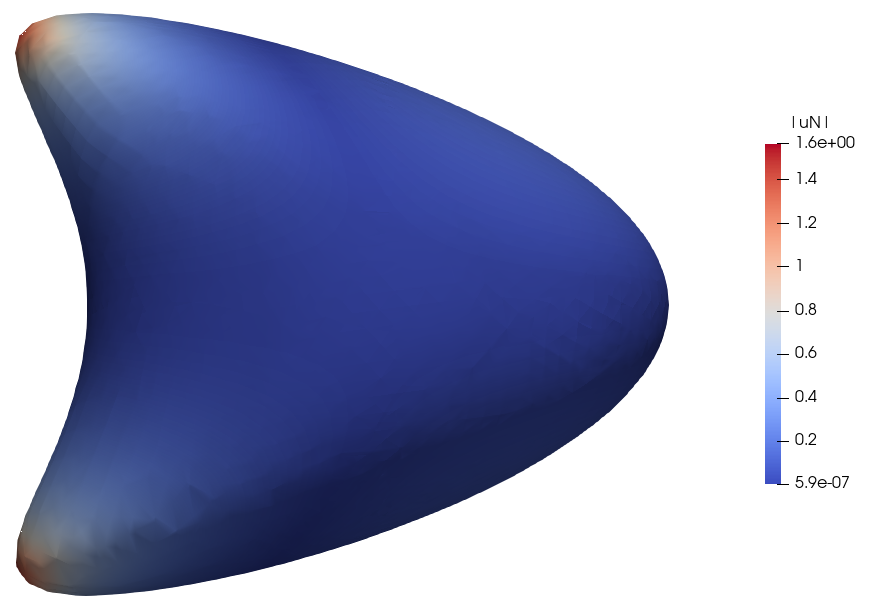}
        \caption{$t = 0$} \label{fig_KiteToSphere_start}
    \end{subfigure}
    \hspace*{1cm}
    \begin{subfigure}[b]{0.25\textwidth}
        \centering
        \includegraphics[width=\textwidth]{./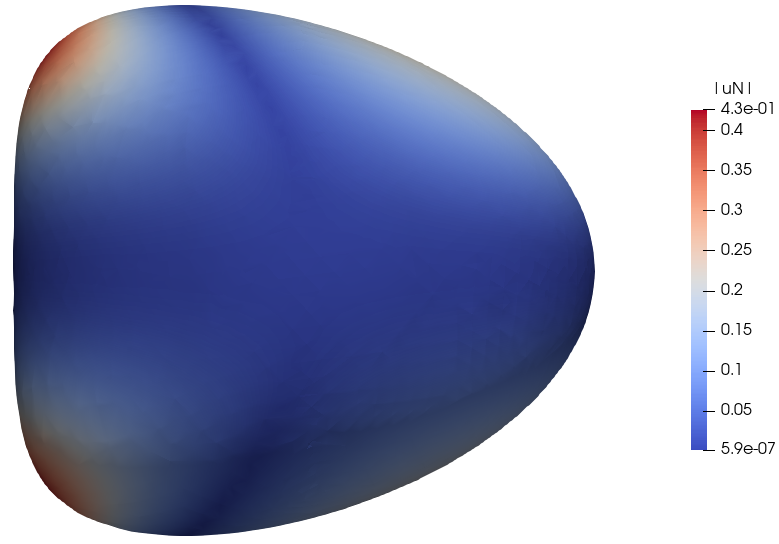}
        \caption{$t = 0.5$}
    \end{subfigure}
    \hspace*{1cm}
    \begin{subfigure}[b]{0.25\textwidth}
        \centering
        \includegraphics[width=0.85\textwidth]{./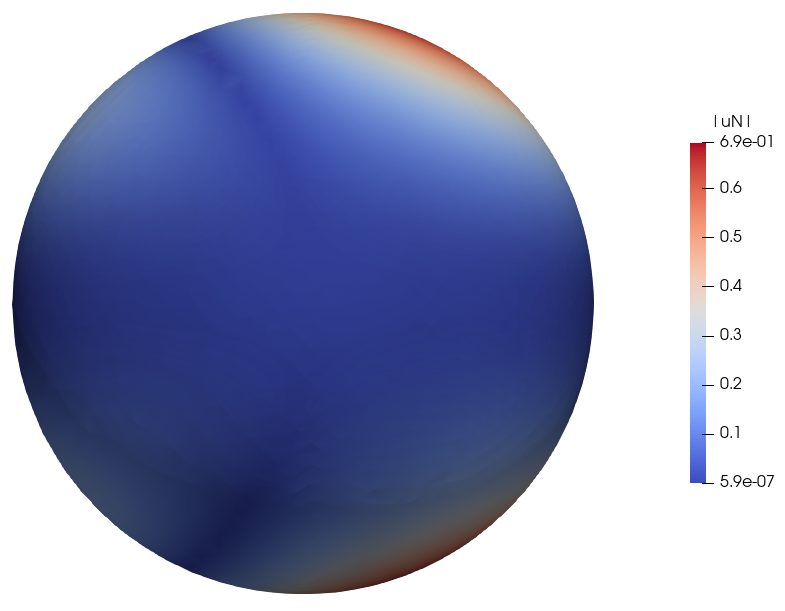}
        \caption{$t = 1$}
    \end{subfigure}
    \caption{Kite deforming into a sphere.}\label{fig_KiteToSphere}
\end{figure}

The velocity field $\bu$ used in the level set equation has a normal component that is determined by the level set evolution:  $\bu \cdot \bn = V_\G = - \frac{\partial}{\partial t} \phi / \bn \cdot \nabla \phi$.
To illustrate that our approach is not restricted to cases with normal velocity fields, we add a tangential component $\bu_T$  to $\bu$.  We use $\bu_T = \mathbf{curl}_\G (x_1 x_2 x_3)$ as a tangential component
in 3D and the tangential component of $(y, -x)^T$ in the 2D case.

The mesh size start as $h_0 = 0.5$, and we halve it in each refinement step. The results are shown in Fig.~\ref{fig_Kite2DConv} for the 2D case and in Fig.~\ref{fig_Kite3DConv} for the 3D case. We observe optimal convergence rates of $\cO(h^2)$ for $k=1$ and $\cO(h^3)$ for $k=2$ for all error measures in two dimensions. In three dimensions, we observe convergence rates that are close to the optimal ones.

\newcommand{\ErrKiteTwoDOrderOne}{./errors/Transport2D/Kite_Order_1_SmallProj_errors.dat}
\newcommand{\ErrKiteTwoDOrderTwo}{./errors/Transport2D/Kite_Order_2_SmallProj_errors.dat}

\begin{figure}[ht!]
    \centering
    \begin{subfigure}[b]{0.40\textwidth}
        \begin{tikzpicture}
            \pgfplotsset{legend style={at={(0.99,0.01)}, anchor=south east, legend columns=1, draw=none, fill=none},}
            \begin{axis}[domain={0.0076:0.55}, ymode=log, xmode=log] 
                \addplot[brown, mark=square*] table[x=MeshSize, y=L2L2lsetTrans]{\ErrKiteTwoDOrderOne};
                \addlegendentry{\small $e_{L^2}$}
                \addplot[blue, mark=diamond*] table[x=MeshSize, y=LinfLinfExLsetOnGh]{\ErrKiteTwoDOrderOne};
                \addlegendentry{\small $e_\G^\infty$}
                \addplot[red, mark=*] table[x=MeshSize, y=L2ExLsetOnGh]{\ErrKiteTwoDOrderOne};
                \addlegendentry{\small $e_\G$}
                \addplot[black, dashed,line width=0.75pt]{1.2*x^2};
                \addlegendentry{\small $\cO(h^2)$}
            \end{axis}
        \end{tikzpicture}
        \caption{BDF$2$, $k=1$}
    \end{subfigure}
    \begin{subfigure}[b]{0.40\textwidth}
        \begin{tikzpicture}
            \pgfplotsset{legend style={at={(0.99,0.01)}, anchor=south east, legend columns=1, draw=none, fill=none},}
            \begin{axis}[domain={0.0076:0.55}, ymode=log, xmode=log]
                \addplot[brown, mark=square*] table[x=MeshSize, y=L2L2lsetTrans]{\ErrKiteTwoDOrderTwo};
                \addlegendentry{\small $e_{L^2}$}
                \addplot[blue, mark=diamond*] table[x=MeshSize, y=LinfLinfExLsetOnGh]{\ErrKiteTwoDOrderTwo};
                \addlegendentry{\small $e_\G^\infty$}
                \addplot[red, mark=*] table[x=MeshSize, y=L2ExLsetOnGh]{\ErrKiteTwoDOrderTwo};
                \addlegendentry{\small $e_\G$}
                \addplot[dashed,line width=0.75pt]{0.4*x^3};
                \addlegendentry{\small $\cO(h^3)$}
            \end{axis}
        \end{tikzpicture}
        \caption{BDF$3$, $k=2$}
    \end{subfigure}
    \caption{Error metrics for  kite deforming to a circle in 2D} \label{fig_Kite2DConv}
\end{figure}

\newcommand{\ErrKiteThreeDOrderOne}{./errors/Transport3D/Kite_Order_1_SmallProj_errors.dat}
\newcommand{\ErrKiteThreeDOrderTwo}{./errors/Transport3D/Kite_Order_2_SmallProj_errors.dat}

\begin{figure}[ht!]
    \centering
    \begin{subfigure}[b]{0.40\textwidth}
        \begin{tikzpicture}
            \pgfplotsset{legend style={at={(0.99,0.01)}, anchor=south east, legend columns=1, draw=none, fill=none},}
            \begin{axis}[domain={0.035:0.55}, ymode=log, xmode=log] 
                \addplot[brown, mark=square*] table[x=MeshSize, y=L2L2lsetTrans]{\ErrKiteThreeDOrderOne};
                \addlegendentry{\small $e_{L^2}$}
                \addplot[blue, mark=diamond*] table[x=MeshSize, y=LinfLinfExLsetOnGh]{\ErrKiteThreeDOrderOne};
                \addlegendentry{\small $e_\G^\infty$}
                \addplot[red, mark=*] table[x=MeshSize, y=L2ExLsetOnGh]{\ErrKiteThreeDOrderOne};
                \addlegendentry{\small $e_\G$}
                \addplot[black, dashed,line width=0.75pt]{0.6*x^2};
                \addlegendentry{\small $\cO(h^2)$}
            \end{axis}
        \end{tikzpicture}
        \caption{BDF$2$, $k=1$}
    \end{subfigure}
    \begin{subfigure}[b]{0.40\textwidth}
        \begin{tikzpicture}
            \pgfplotsset{legend style={at={(0.99,0.01)}, anchor=south east, legend columns=1, draw=none, fill=none},}
            \begin{axis}[domain={0.07:0.55}, ymode=log, xmode=log]
                \addplot[brown, mark=square*] table[x=MeshSize, y=L2L2lsetTrans]{\ErrKiteThreeDOrderTwo};
                \addlegendentry{\small $e_{L^2}$}
                \addplot[blue, mark=diamond*] table[x=MeshSize, y=LinfLinfExLsetOnGh]{\ErrKiteThreeDOrderTwo};
                \addlegendentry{\small $e_\G^\infty$}
                \addplot[red, mark=*] table[x=MeshSize, y=L2ExLsetOnGh]{\ErrKiteThreeDOrderTwo};
                \addlegendentry{\small $e_\G$}
                \addplot[dashed,line width=0.75pt]{0.2*x^3};
                \addlegendentry{\small $\cO(h^3)$}
            \end{axis}
        \end{tikzpicture}
        \caption{BDF$3$, $k=2$}
    \end{subfigure}
    \caption{Error metrics for kite deforming to a sphere in 3D} \label{fig_Kite3DConv}
\end{figure}

We examine the effect of the inflow boundary data, cf.  Section \ref{s:bc}. We consider kite($2D$) with BDF2 and $k=1$, and compare the results for the first order (in time) boundary data \eqref{bnd_data_BDF1_lo}, the second order boundary values \eqref{bnd_data_BDF2_lo} and a fourth order version  of the boundary data from \eqref{bnd_data_BDF1_ho}. We determined the surface error ($e_\G$) and the $L^2-$error in the narrow band ($e_{L^2}$) for the three choices (low, medium and high accuracy) for boundary data. The results are shown in Figure \ref{fig_BndTest}. We observe almost no difference in the errors after the first mesh refinement. The results demonstrate that due to the choice of a suitable (sufficiently small) projection domain in the extension method we obtain higher order accuracy in the surface approximation even with low order accurate inflow boundary data.

\newcommand{\ErrKiteLoBnd}{./errors/Transport2D/Kite_SpecialBnd/LoBnd_SmallProj_errors.dat}
\newcommand{\ErrKiteHoBnd}{./errors/Transport2D/Kite_SpecialBnd/HoBnd_SmallProj_errors.dat}
\newcommand{\ErrKiteMedBnd}{./errors/Transport2D/Kite_SpecialBnd/MedBnd_SmallProj_errors.dat}

\begin{figure}[ht!]
    \centering
    \begin{subfigure}[b]{0.40\textwidth}
        \begin{tikzpicture}
            \pgfplotsset{legend style={at={(1.55,0.01)}, anchor=south east, legend columns=1, draw=none, fill=none},}
            \begin{axis}[domain={0.0156:0.55}, ymode=log, xmode=log] 
                \addplot[brown, dashed, mark=square*] table[x=MeshSize, y=LinfLinfExLsetOnGh]{\ErrKiteLoBnd};
                \addlegendentry{\small $e_{L^2} - \text{low}$}
                \addplot[red, dashed, mark=square*] table[x=MeshSize, y=LinfLinfExLsetOnGh]{\ErrKiteMedBnd};
                \addlegendentry{\small $e_{L^2} - \text{med.}$}
                \addplot[blue, dashed, mark=square*] table[x=MeshSize, y=LinfLinfExLsetOnGh]{\ErrKiteHoBnd};
                \addlegendentry{\small $e_{L^2} - \text{high}$}
                \addplot[brown, mark=*] table[x=MeshSize, y=L2ExLsetOnGh]{\ErrKiteLoBnd};
                \addlegendentry{\small $e_\G - \text{low}$}
                \addplot[red, mark=*] table[x=MeshSize, y=L2ExLsetOnGh]{\ErrKiteMedBnd};
                \addlegendentry{\small $e_\G - \text{med.}$}
                \addplot[blue, mark=*] table[x=MeshSize, y=L2ExLsetOnGh]{\ErrKiteHoBnd};
                \addlegendentry{\small $e_\G - \text{high}$}
            \end{axis}
        \end{tikzpicture}
        \end{subfigure}
    \caption{Kite deforming to a circle in 2D, BDF2, $k=1$; different inflow boundary data} \label{fig_BndTest}
\end{figure}
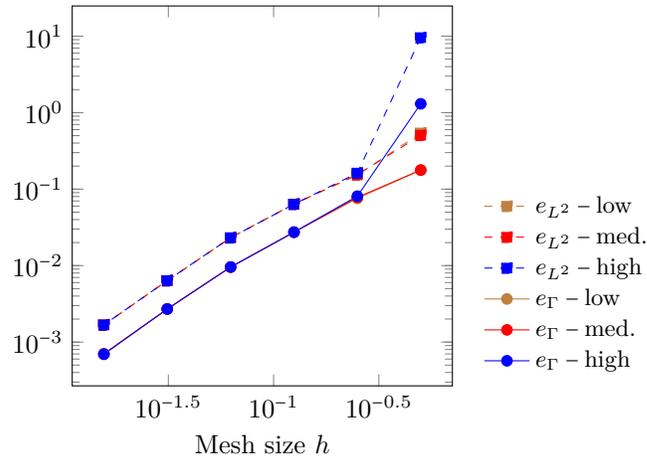

\subsection{Ball in a rotating flow field}
The exact level set function is given by $\phi(x, t) = (x_1 - \cos(2\pi t))^2 + (x_2 - \sin(2\pi t))^2 - \frac{1}{2}$ in 2D and $\phi(x, t) = (x_1 - \cos(2\pi t))^2 + (x_2 - \sin(2\pi t))^2 + x_3^2 - \frac{1}{2}$ in 3D. This level set function describes a sphere (or circle) that makes a full rotation around the origin for $t\in[0,1]$. The velocity field of the rotation is given by $\bu = 2\pi (-x_2, x_1)^T$ in 2D and $\bu = 2\pi (-x_2, x_1, 0)^T$ in 3D. The geometry evolution  is shown in Figure \ref{fig_TravelingSphere}.

\begin{figure}[ht!]
    \centering
    \begin{subfigure}[b]{0.35\textwidth}
        \centering
        \includegraphics[width=0.8\textwidth]{./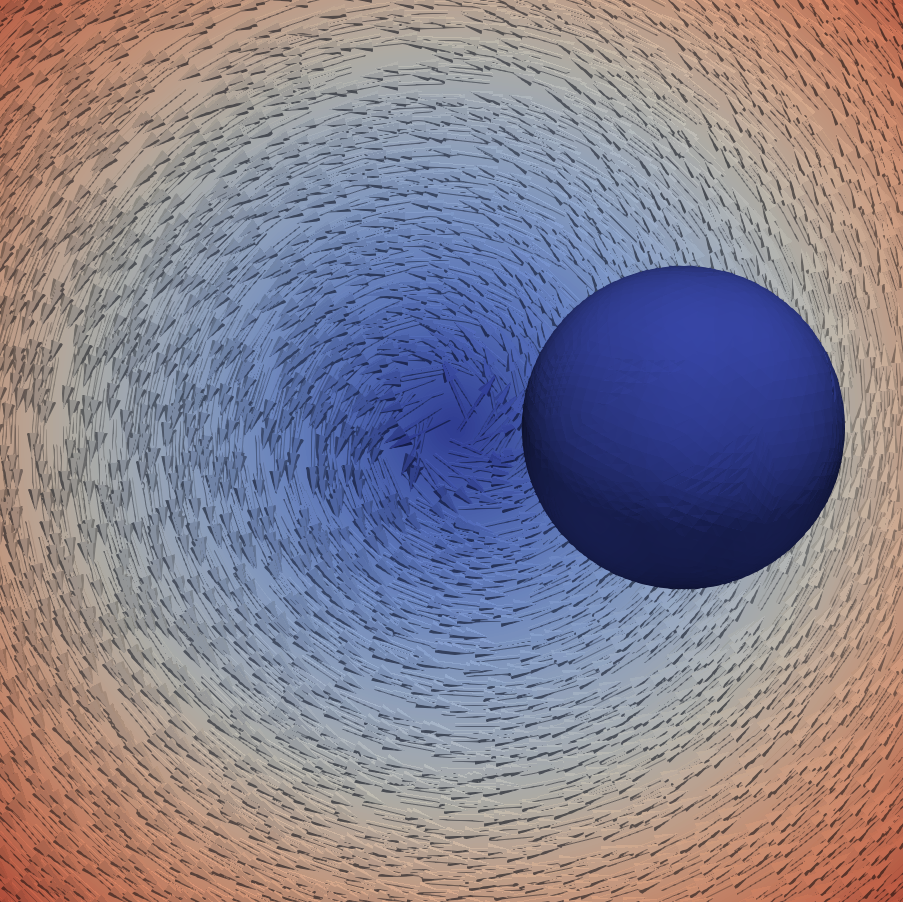}
        \caption{$t = 0$}
    \end{subfigure}
    \hspace*{0.8cm}
    \begin{subfigure}[b]{0.35\textwidth}
        \centering
        \includegraphics[width=0.8\textwidth]{./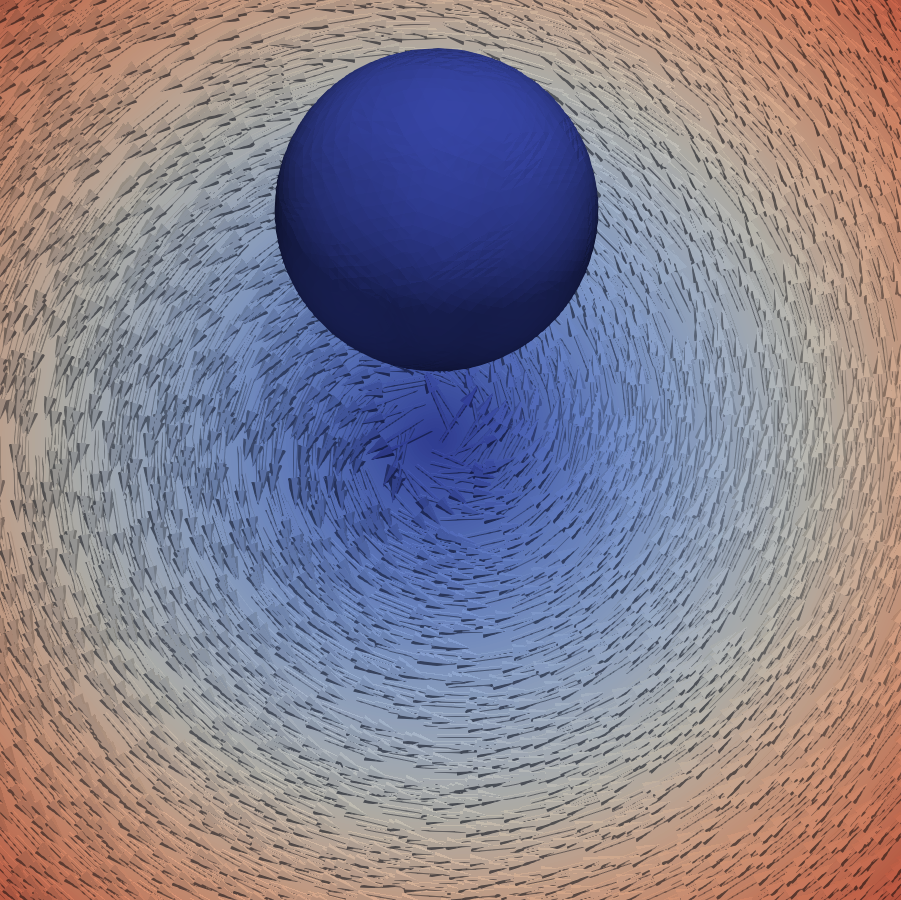}
        \caption{$t = 0.25$}
    \end{subfigure}
    \begin{subfigure}[b]{0.35\textwidth}
        \centering
        \includegraphics[width=0.8\textwidth]{./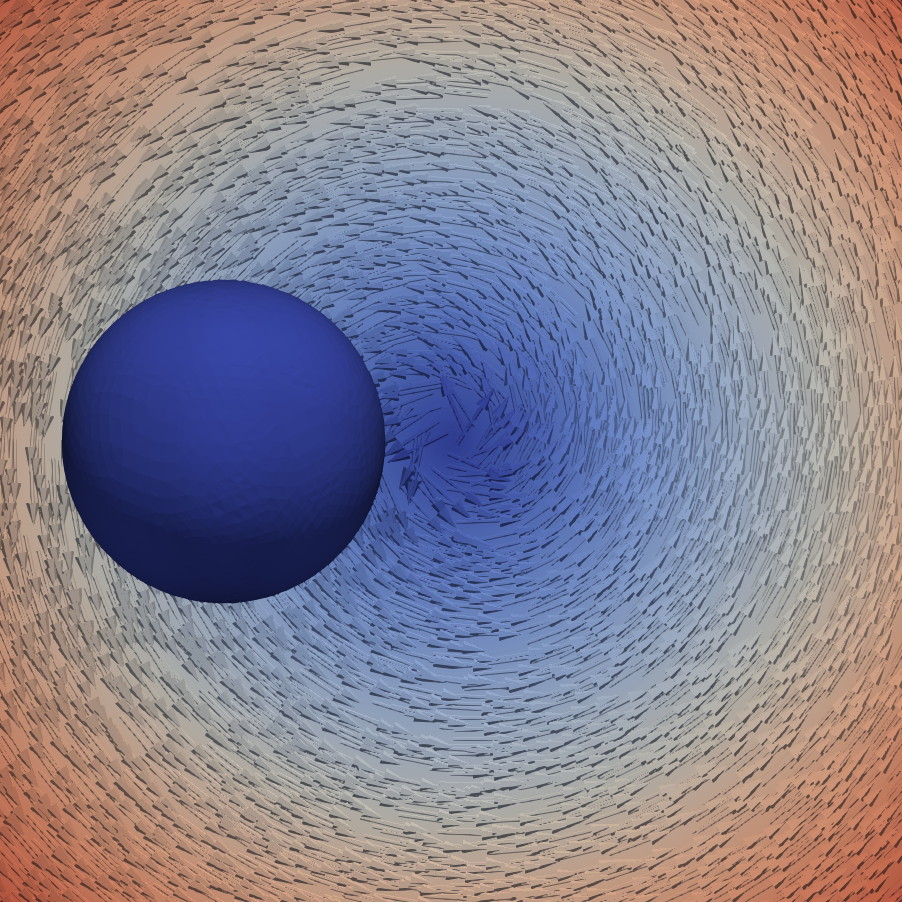}
        \caption{$t = 0.5$}
        \end{subfigure}
    \hspace*{0.8cm}
    \begin{subfigure}[b]{0.35\textwidth}
        \centering
        \includegraphics[width=0.8\textwidth]{./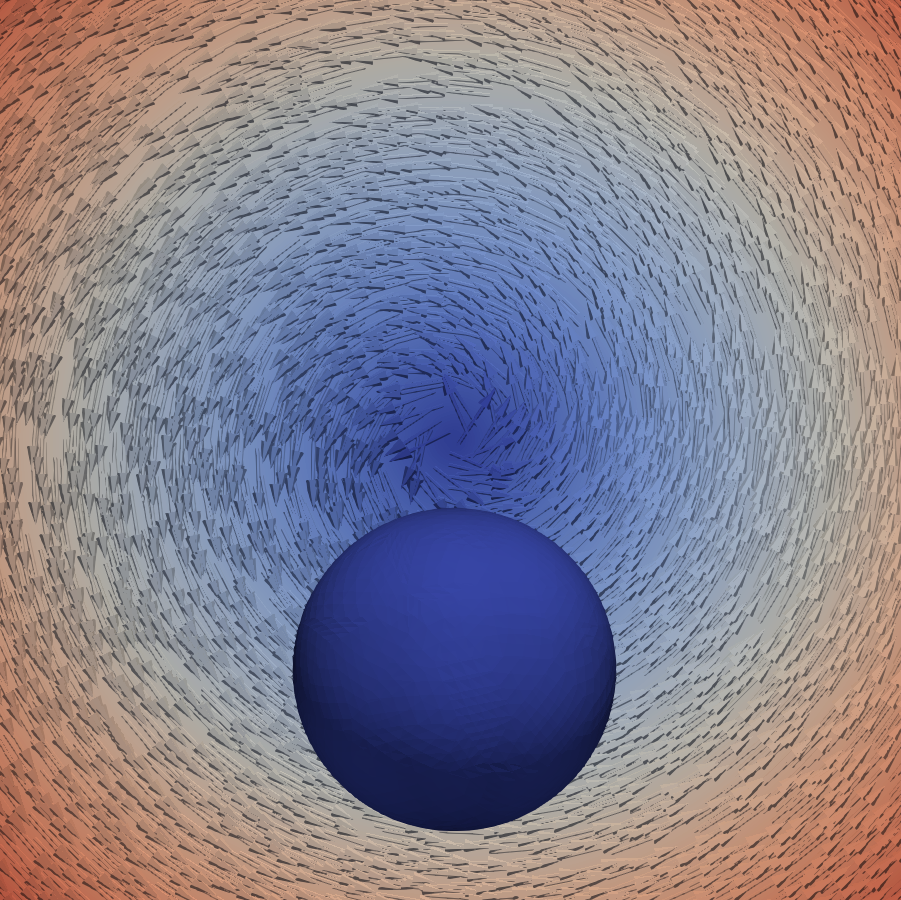}
        \caption{$t = 0.75$}
    \end{subfigure}
    \caption{Sphere in rotating flow field}\label{fig_TravelingSphere}
\end{figure}

The coarsest mesh size is  $h_0 = 0.25$, and we halve it in each refinement step. The results are shown in Fig.~\ref{fig_Sphere2DConv} for the 2D case and in Fig.~\ref{fig_Sphere3DConv} for the 3D case. We observe a deterioration of the convergence rate in the last refinement in the $L^2-$error in the narrow band in two dimensions for $k=2$. Recall, however, that our main interest is in an accurate surface approximation.   For this we observe optimal convergence rates of $\cO(h^2)$ for $k=1$ and $\cO(h^3)$ for $k=2$ with all corresponding error measures in two and three dimensions.

\newcommand{\ErrSphereTwoDOrderOne}{./errors/Transport2D/Sphere_Order_1_SmallProj_errors.dat}
\newcommand{\ErrSphereTwoDOrderTwo}{./errors/Transport2D/Sphere_Order_2_SmallProj_errors.dat}

\begin{figure}[ht!]
    \centering
    \begin{subfigure}[b]{0.40\textwidth}
        \begin{tikzpicture}
            \pgfplotsset{legend style={at={(0.99,0.01)}, anchor=south east, legend columns=1, draw=none, fill=none},}
            \begin{axis}[domain={0.0156:0.28}, ymode=log, xmode=log] 
                \addplot[brown, mark=square*] table[x=MeshSize, y=L2L2lsetTrans]{\ErrSphereTwoDOrderOne};
                \addlegendentry{\small $e_{L^2}$}
                \addplot[blue, mark=diamond*] table[x=MeshSize, y=LinfLinfExLsetOnGh]{\ErrSphereTwoDOrderOne};
                \addlegendentry{\small $e_\G^\infty$}
                \addplot[red, mark=*] table[x=MeshSize, y=L2ExLsetOnGh]{\ErrSphereTwoDOrderOne};
                \addlegendentry{\small $e_\G$}
                \addplot[black, dashed,line width=0.75pt]{0.6*x^2};
                \addlegendentry{\small $\cO(h^2)$}
            \end{axis}
        \end{tikzpicture}
        \caption{BDF$2$, $k=1$}
    \end{subfigure}
    \begin{subfigure}[b]{0.40\textwidth}
        \begin{tikzpicture}
            \pgfplotsset{legend style={at={(0.99,0.01)}, anchor=south east, legend columns=1, draw=none, fill=none},}
            \begin{axis}[domain={0.0156:0.28}, ymode=log, xmode=log]
                \addplot[brown, mark=square*] table[x=MeshSize, y=L2L2lsetTrans]{\ErrSphereTwoDOrderTwo};
                \addlegendentry{\small $e_{L^2}$}
                \addplot[blue, mark=diamond*] table[x=MeshSize, y=LinfLinfExLsetOnGh]{\ErrSphereTwoDOrderTwo};
                \addlegendentry{\small $e_\G^\infty$}
                \addplot[red, mark=*] table[x=MeshSize, y=L2ExLsetOnGh]{\ErrSphereTwoDOrderTwo};
                \addlegendentry{\small $e_\G$}
                \addplot[dashed,line width=0.75pt]{0.02*x^3};
                \addlegendentry{\small $\cO(h^3)$}
            \end{axis}
        \end{tikzpicture}
        \caption{BDF$3$, $k=2$}
    \end{subfigure}
    \caption{Error metrics for circle rotating around the origin in 2D} \label{fig_Sphere2DConv}
\end{figure}
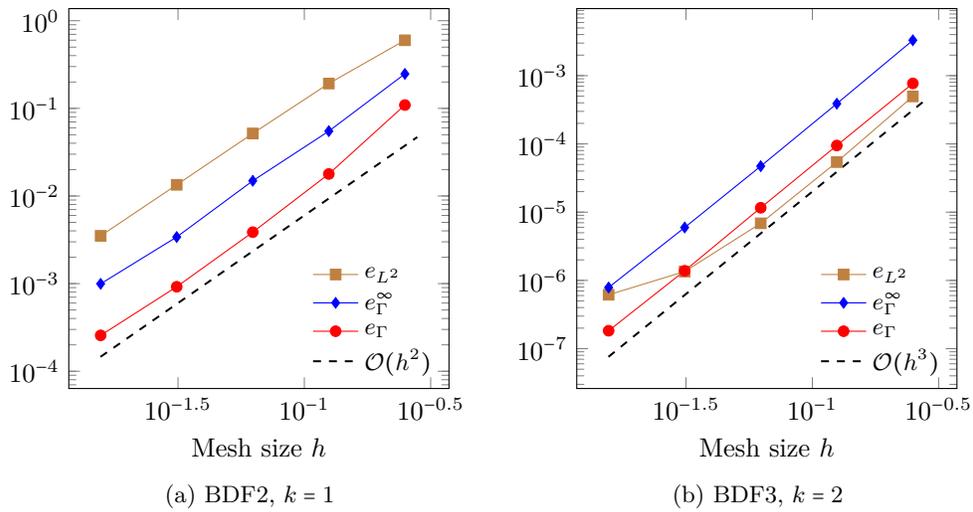

\newcommand{\ErrSphereThreeDOrderOne}{./errors/Transport3D/Sphere_Order_1_SmallProj_errors.dat}
\newcommand{\ErrSphereThreeDOrderTwo}{./errors/Transport3D/Sphere_Order_2_SmallProj_errors.dat}

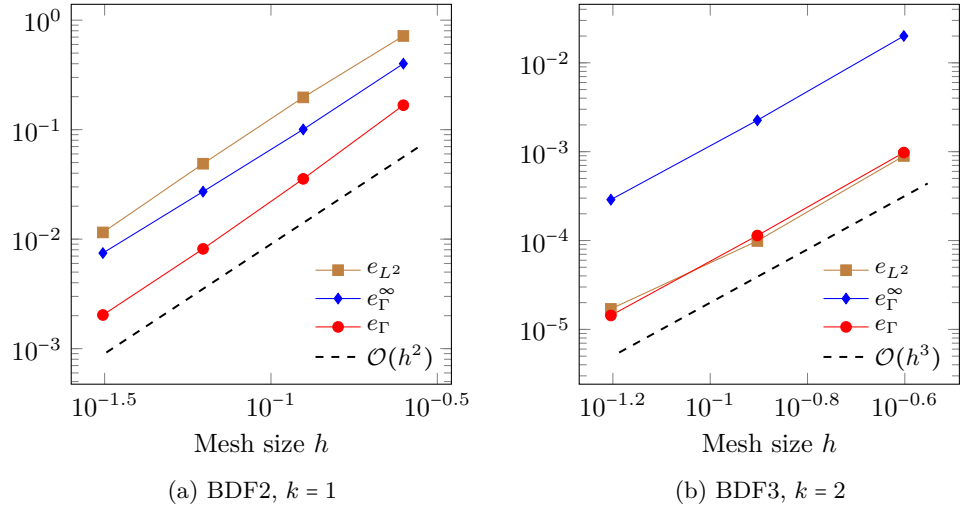
\begin{figure}[ht!]
    \centering
    \begin{subfigure}[b]{0.40\textwidth}
        \begin{tikzpicture}
            \pgfplotsset{legend style={at={(0.99,0.01)}, anchor=south east, legend columns=1, draw=none, fill=none},}
            \begin{axis}[domain={0.032:0.28}, ymode=log, xmode=log] 
                \addplot[brown, mark=square*] table[x=MeshSize, y=L2L2lsetTrans]{\ErrSphereThreeDOrderOne};
                \addlegendentry{\small $e_{L^2}$}
                \addplot[blue, mark=diamond*] table[x=MeshSize, y=LinfLinfExLsetOnGh]{\ErrSphereThreeDOrderOne};
                \addlegendentry{\small $e_\G^\infty$}
                \addplot[red, mark=*] table[x=MeshSize, y=L2ExLsetOnGh]{\ErrSphereThreeDOrderOne};
                \addlegendentry{\small $e_\G$}
                \addplot[black, dashed,line width=0.75pt]{0.9*x^2};
                \addlegendentry{\small $\cO(h^2)$}
            \end{axis}
        \end{tikzpicture}
        \caption{BDF$2$, $k=1$}
    \end{subfigure}
    \begin{subfigure}[b]{0.40\textwidth}
        \begin{tikzpicture}
            \pgfplotsset{legend style={at={(0.99,0.01)}, anchor=south east, legend columns=1, draw=none, fill=none},}
            \begin{axis}[domain={0.065:0.28}, ymode=log, xmode=log]
                \addplot[brown, mark=square*] table[x=MeshSize, y=L2L2lsetTrans]{\ErrSphereThreeDOrderTwo};
                \addlegendentry{\small $e_{L^2}$}
                \addplot[blue, mark=diamond*] table[x=MeshSize, y=LinfLinfExLsetOnGh]{\ErrSphereThreeDOrderTwo};
                \addlegendentry{\small $e_\G^\infty$}
                \addplot[red, mark=*] table[x=MeshSize, y=L2ExLsetOnGh]{\ErrSphereThreeDOrderTwo};
                \addlegendentry{\small $e_\G$}
                \addplot[dashed,line width=0.75pt]{0.02*x^3};
                \addlegendentry{\small $\cO(h^3)$}
            \end{axis}
        \end{tikzpicture}
        \caption{BDF$3$, $k=2$}
    \end{subfigure}
    \caption{Error metrics for sphere rotating around the origin in 3D} \label{fig_Sphere3DConv}
\end{figure}

In the next experiment, we investigate how a ``too large'' projection domain in the ghost penalty extension method affects the accuracy of the discretization method. We compare the projection domain $\Omega_h^p = \cN^1(\cT_\G)$, which is used in all experiments above, with the maximal possible projection domain $\Omega_h^p = \Omega_\G^n \cap \Omega_\G^{n+1}$. We use first order polynomials in space and a BDF2 method in time. The surface errors ($L^2$- and $L^\infty$-errors) for the 2D case are shown in Figure \ref{fig_Sphere2D_diff_Proj}. We observe that with the maximal projection domain we obtain a suboptimal  rate of convergence. This confirms the heuristics discussed in Section~\ref{sec_transport_whole_algo}.

\newcommand{\ErrSphereTwoDOrderOneMaxProj}{./errors/Transport2D/Sphere_Order_1_MaxProj_errors.dat}

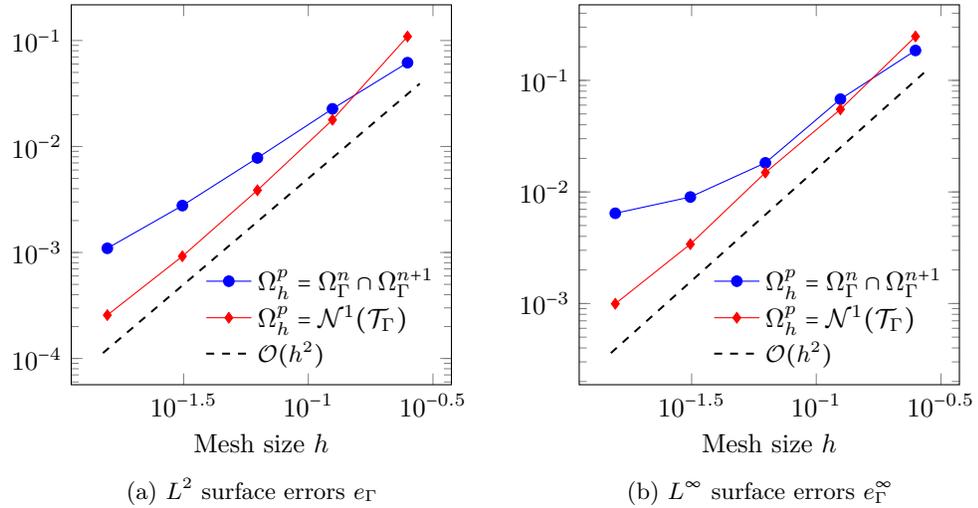
\begin{figure}[ht!]
    \centering
    \begin{subfigure}[b]{0.40\textwidth}
        \begin{tikzpicture}
            \pgfplotsset{legend style={at={(0.99,0.02)}, anchor=south east, legend columns=1, draw=none, fill=none},}
            \begin{axis}[domain={0.015:0.28}, ymode=log, xmode=log]
                \addplot[blue, mark=*] table[x=MeshSize, y=L2ExLsetOnGh]{\ErrSphereTwoDOrderOneMaxProj};
                \addlegendentry{$\Omega_h^p = \Omega_\G^n \cap \Omega_\G^{n+1}$}
                \addplot[red, mark=diamond*] table[x=MeshSize, y=L2ExLsetOnGh]{\ErrSphereTwoDOrderOne};
                \addlegendentry{$\Omega_h^p = \cN^1(\cT_\G)$}
                \addplot[dashed,line width=0.75pt]{0.5*x^2};
                \addlegendentry{\small $\cO(h^2)$}
            \end{axis}
        \end{tikzpicture}
        \caption{$L^2$ surface errors $e_\G$}
    \end{subfigure}
    \begin{subfigure}[b]{0.40\textwidth}
        \begin{tikzpicture}
            \pgfplotsset{legend style={at={(0.99,0.02)}, anchor=south east, legend columns=1, draw=none, fill=none},}
            \begin{axis}[domain={0.015:0.28}, ymode=log, xmode=log]
                \addplot[blue, mark=*] table[x=MeshSize, y=LinfLinfExLsetOnGh]{\ErrSphereTwoDOrderOneMaxProj};
                \addlegendentry{$\Omega_h^p = \Omega_\G^n \cap \Omega_\G^{n+1}$}
                \addplot[red, mark=diamond*] table[x=MeshSize, y=LinfLinfExLsetOnGh]{\ErrSphereTwoDOrderOne};
                \addlegendentry{$\Omega_h^p = \cN^1(\cT_\G)$}
                \addplot[dashed,line width=0.75pt]{1.6*x^2};
                \addlegendentry{\small $\cO(h^2)$}
            \end{axis}
        \end{tikzpicture}
        \caption{$L^\infty$ surface errors $e_\G^\infty$}
    \end{subfigure}
    \caption{Different projection domains in extension method} \label{fig_Sphere2D_diff_Proj}
\end{figure}

In a further experiment, we investigate the volume conservation in 3D. Since the velocity field  is divergence-free, in the continuous problem we have volume conservation. We measure the enclosed volume of the numerical approximation $\Gamma_h(t_n)$ in each time step for the lower and higher order narrow band scheme with fix mesh size $h = 0.0625$. The results are shown in Figure \ref{fig_3DVolumeConv}, and show that using higher order polynomials ($k = 2$), leads to a very accurate  volume conservation.

\newcommand{\ErrSphereThreeDOrderOneVolume}{./errors/Transport3D/VolumeErrors/Sphere_Order_1_SmallProj_Lx3_VolumeErrors.dat}
\newcommand{\ErrSphereThreeDOrderTwoVolume}{./errors/Transport3D/VolumeErrors/Sphere_Order_2_SmallProj_Lx3_VolumeErrors.dat}

\begin{figure}[ht!]
    \centering
    \begin{subfigure}[b]{0.40\textwidth}
        \begin{tikzpicture}
            \pgfplotsset{legend style={at={(0.6,0.02)}}, xlabel={Time $t$}, ylabel={error in enclosed volume}}
            \begin{axis}[no markers, ymode=log, every axis plot/.append style={ultra thick}] 
                \addplot[red, mark=] table[x=Time, y=Error]{\ErrSphereThreeDOrderOneVolume};
                \addlegendentry{\small BDF$2$, $k=1$}
                \addplot[blue, mark=] table[x=Time, y=Error]{\ErrSphereThreeDOrderTwoVolume};
                \addlegendentry{\small BDF$3$, $k=2$}
            \end{axis}
        \end{tikzpicture}
    \end{subfigure}
    \caption{Rotating sphere; error in enclosed volume} \label{fig_3DVolumeConv}
\end{figure}
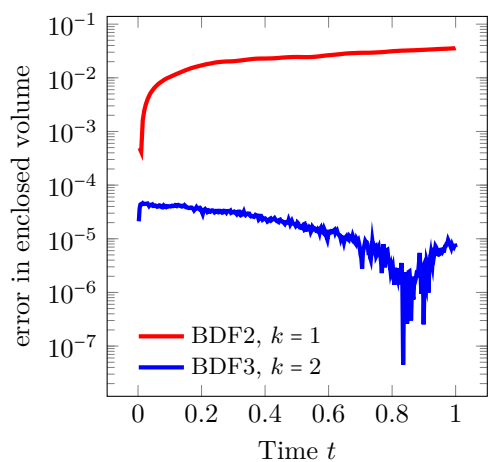

\subsection{Strongly deforming sphere.}

As an initial surface we take a sphere with radius $0.15$ centered at $(0.5, 0.75, 0.5)$. A corresponding level set function is given by $\phi(x,y,z) = (x-0.5)^2 + (y-0.75)^2 + (z-0.5)^2 - 0.15^2$. The sphere is transported and deformed in a velocity field given by
\begin{equation*}
    \bu = \cos\left(\frac{\pi}{T} t\right)
    \left(\begin{array}{c}
        -2 \sin(\pi x)^2 \sin(\pi y) \cos(\pi y)\\
         2 \sin(\pi y)^2 \sin(\pi x) \cos(\pi x)\\
         0
    \end{array}\right), \quad t \in [0,T].
\end{equation*}
Due to the periodic $t$-behaviour of this velocity field  the surface will deform until $t = T/2$ and then return to the initial spherical shape at $t = T$.
The surface exhibits a spiral-like deformation. Increasing $T$ results in  stronger deformed shapes of \Gt at $t = T/2$.  In Figure \ref{fig_DeformingDrop} we show resulting deformations of the  sphere for $T = 2$ and $T = 4$. The maximal deformation can be observed at $t = T/2$. Similar test cases are widely used to test interface capturing and interface tracking methods, cf. \cite{DiPietroLoForteParolini2006, EnrightFedkiwFerzigerMitchell2002, MarchandiseRemacleChevaugeon2006}.
\begin{figure}[ht!]
    \centering
    \begin{subfigure}[b]{0.23\textwidth}
        \centering
        \includegraphics[width=\textwidth]{./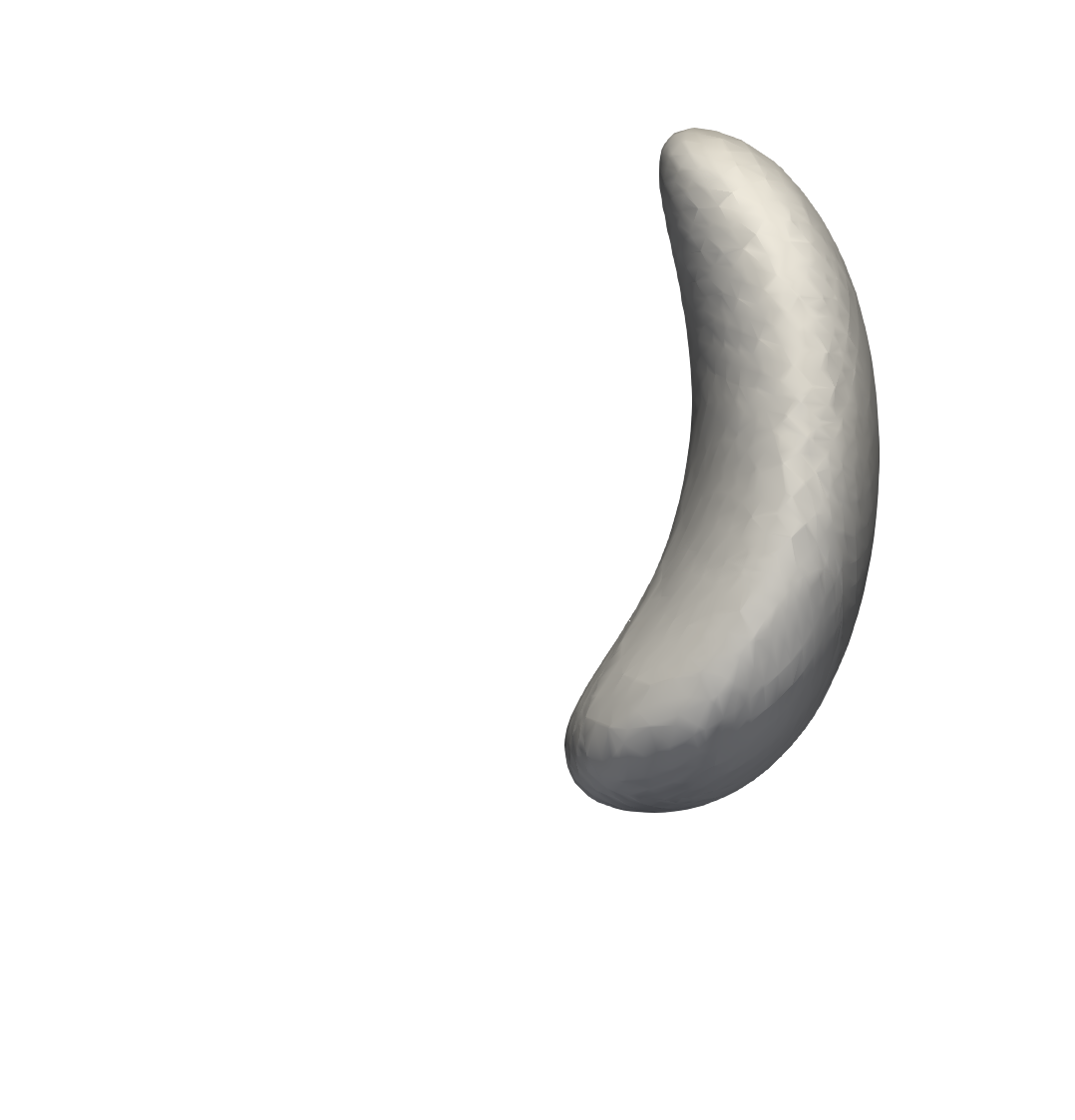}
        \caption{$T = 2$, $t = \frac{T}{4}$}
    \end{subfigure}
    \begin{subfigure}[b]{0.23\textwidth}
        \centering
        \includegraphics[width=\textwidth]{./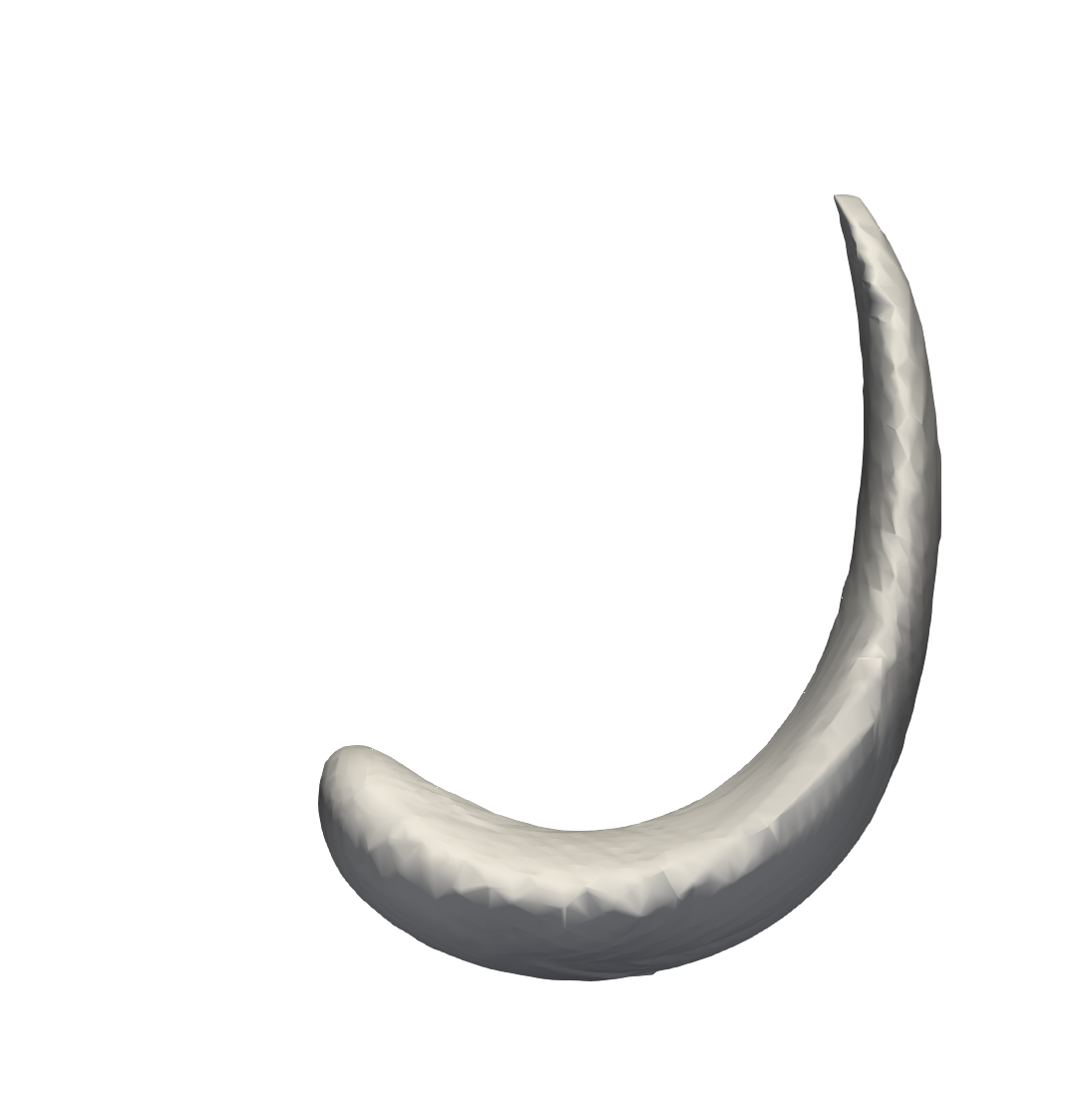}
        \caption{$T = 2$, $t = \frac{T}{2}$}
    \end{subfigure}
    \begin{subfigure}[b]{0.23\textwidth}
        \centering
        \includegraphics[width=\textwidth]{./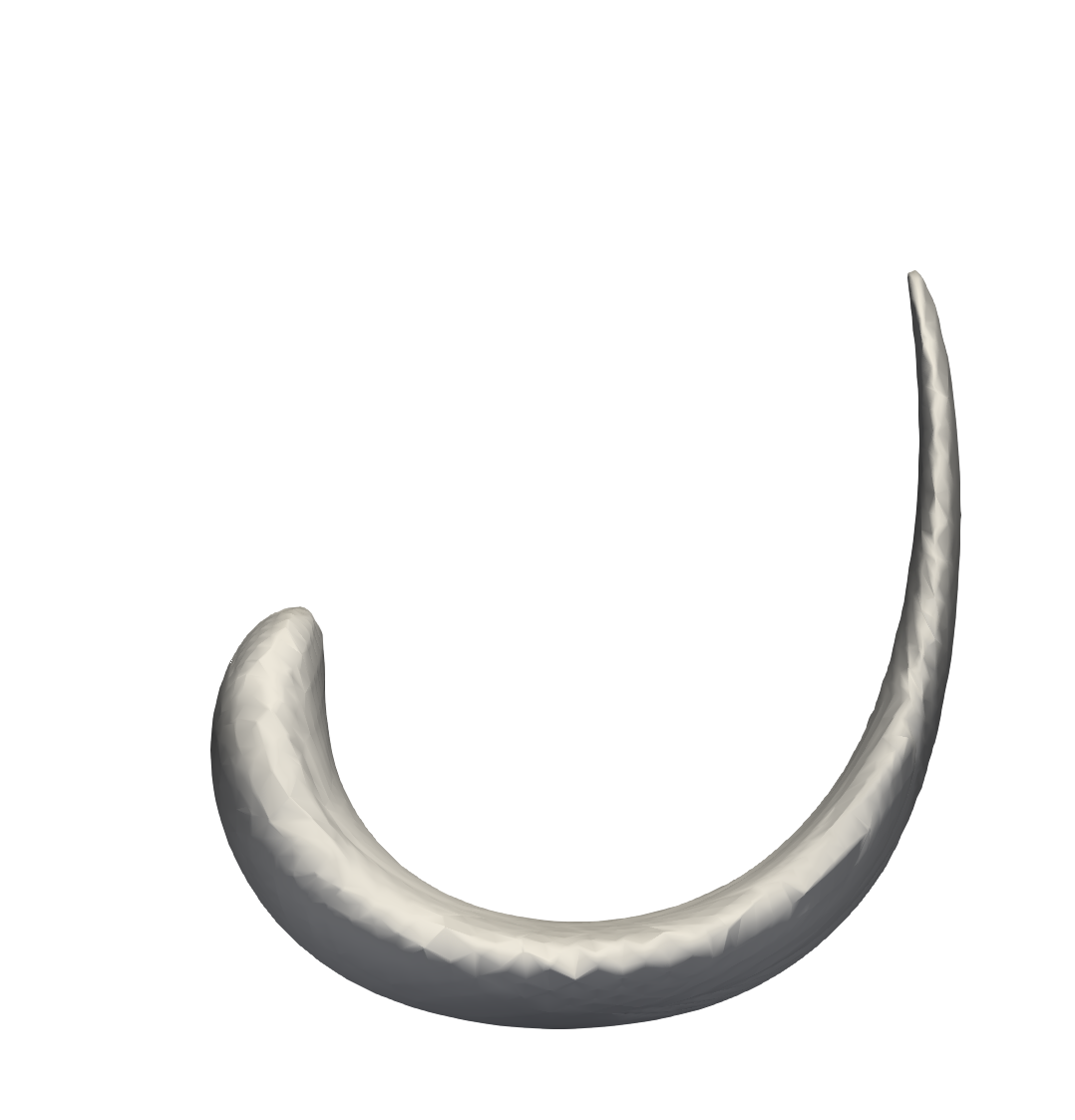}
        \caption{$T = 4$, $t = \frac{T}{4}$}
    \end{subfigure}
    \begin{subfigure}[b]{0.23\textwidth}
        \centering
        \includegraphics[width=\textwidth]{./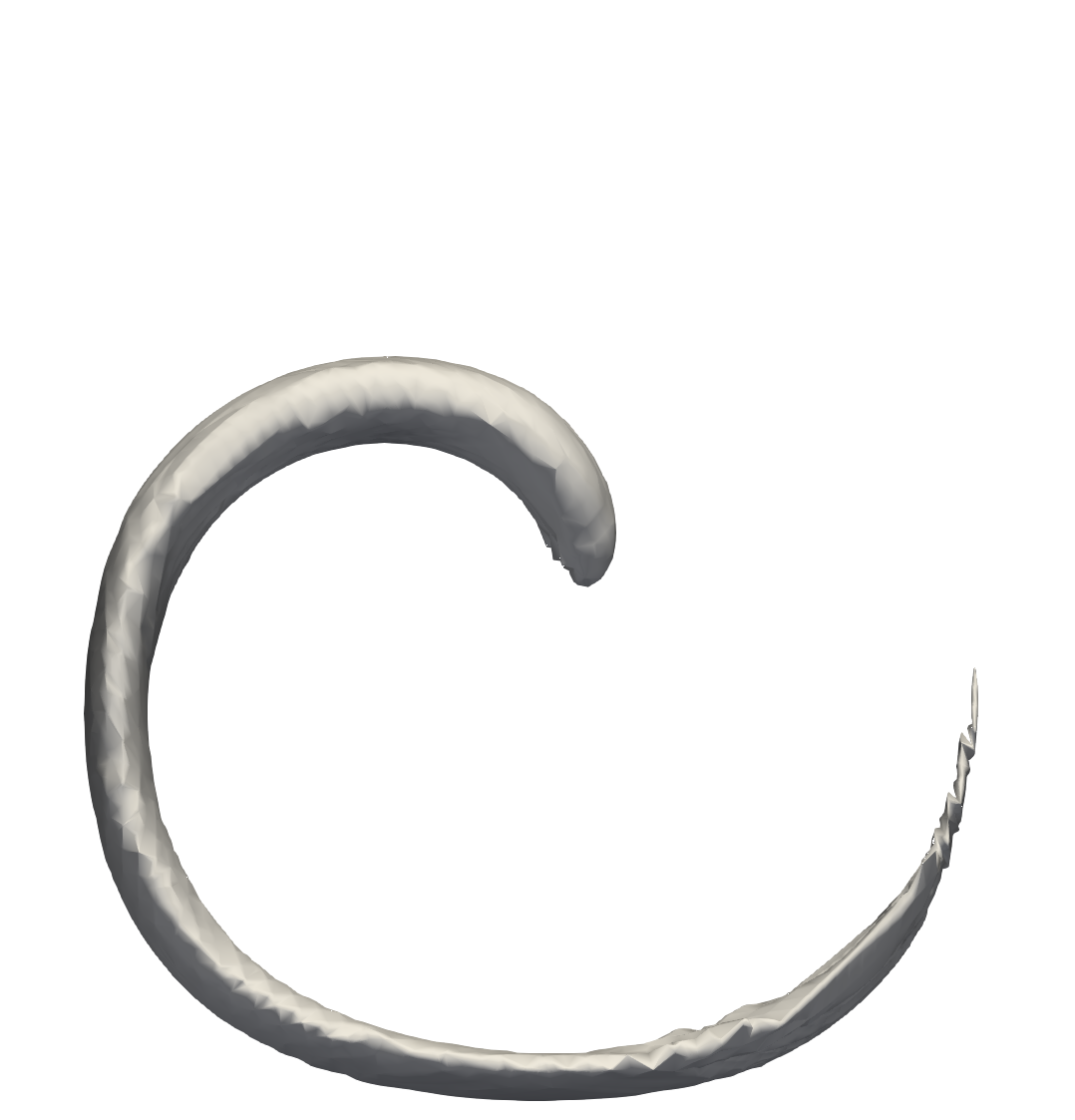}
        \caption{$T = 4$, $t = \frac{T}{2}$}
    \end{subfigure}
    \caption{Deformations of the sphere in a vortex} \label{fig_DeformingDrop}
\end{figure}

We define the computational domain as $\Omega = [0,1]^3$ and apply the transport scheme on the cut elements augmented by four layers of neighboring elements, denoted as $\Omega_\Gamma^n := \mathcal{N}^4(\mathcal{T}_\Gamma^n)$. The projection domain in the extension method consists of the cut elements with two layers of neighboring elements added, i.e. $\Omega_h^{p, n+1} = \mathcal{N}^2(\mathcal{T}_\Gamma^{n+1})$. We use the BDF2 and BDF3 schemes combined with finite elements of degree $1$ and $2$, respectively. In the previous examples, we took a variable time step size $\Delta t \sim h / \|V_\G^n \|_\infty$ to ensure that the zero level of the surface remains in the transport domain $\Omega_\Gamma^n$ within the time step. However, this time step size criterion is not satisfactory in this example because the velocity $V_\G$ vanishes at $t = T/2$. Instead of a variable time step size,  we take a constant time step size $\Delta t = h$ in the following experiments.
At the end of each time step, we check if the zero level of the level set function stays in the transport domain. If this is not the case, the algorithm breaks down.

In this example the exact level set function is not known for $t \notin \{0,T\}$. As a measure of accuracy we use
\[
|e_{\G,N}|^2 = \oint_{\Gh(T)} \phi(\bx,T)^2 \dx,
\]
 which quantifies the deviation of the discrete zero level set from the spherical shape at the final time $t=T$.

We present results of our method applied to this deforming sphere example for $T=2$ in Figure~\ref{fig_DefDrop_T2}. The initial mesh and time step sizes are taken as $h_0 = \Delta t_0 = 2^{-4}=0.0625$ and are halved in each refinement step. We observe a convergence rate of (approximately) $\cO(h^{1.5})$ for $k=1$ and second order convergence for $k=2$.

\begin{figure}[ht!]
    \centering
    \begin{subfigure}[b]{0.50\textwidth}
        \begin{tikzpicture}
            \pgfplotsset{legend style={at={(0.99,0.02)}, anchor=south east, legend columns=1, draw=none, fill=none}}
            \begin{axis}[domain={0.0078:0.0625}, ymode=log, xmode=log]
                \addplot[red, mark=o] table [x=h, y=L2ExLsetOnGh, col sep=comma]{./T2_k1.csv}; \addlegendentry{$k=1$}
                \addplot[blue, mark=*] table [x=h, y=L2ExLsetOnGh, col sep=comma, skip coords between index={1}{3}]{./T2_k2.csv}; \addlegendentry{$k=2$}
                \addplot[dotted,line width=0.75pt]{2*x^1.5}; \addlegendentry{\small $\cO(h^{1.5})$}
                \addplot[dashed,line width=0.75pt]{2*x^2}; \addlegendentry{\small $\cO(h^2)$}
            \end{axis}
        \end{tikzpicture}
    \end{subfigure}
    \caption{$e_{\G, N}$ for $T = 2$} \label{fig_DefDrop_T2}
\end{figure}

We show the values of $e_{\G,N}$ for different end times $T$ in Table \ref{table_DefDrop3D}. For certain combinations of end times $T$ and refinement levels, we cannot report errors due to insufficient mesh resolution. By taking sufficiently small $h$ and $k = 1$, we can handle the case $T=4$ in which a very strong deformation occurs. For $T = 3$ and $T = 4$ we do not have results for $k = 2$ since the zero level of the level set function leaves the transport domain $\Omega_\Gamma^n$ for the given time step size $\Delta t = h$ and the algorithm breaks down. As expected, the errors are smaller for smaller $T$ values.  The convergence rates in the table vary between $1$ and $2$ for $k=1$ and between $1.5$ and $2.5$ for $k=2$.

\begin{figure}[ht!]
    \centering
    \includegraphics[width=\textwidth]{./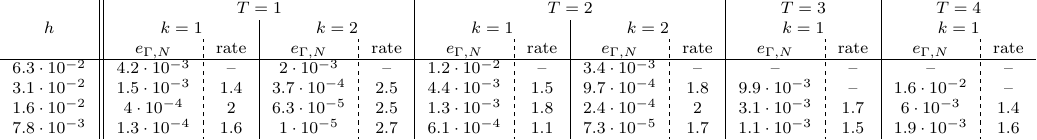}
    \caption{$e_{\G,N}$ errors at final time $T$.}
    \label{table_DefDrop3D}
\end{figure}

The suboptimal rates in this experiment can be explained by the rather poor mesh resolution. We do not use an adaptive mesh refinement strategy and the mesh resolution is too low to resolve the strong deformations of the sphere. In Figure~\ref{fig_Zoom} we show a zoom of the narrow tail of the deformed sphere for $T=4$, $t \approx T/2$ and $k=1$. The mesh shown is for $h=2^{-6}=0.016$ (second refinement level). The cut elements are marked in red and the transport elements in blue. As one can see from this figure,  we  have a poor resolution of the part of the surface that is most strongly deformed.
To overcome this problem one could use an adaptive mesh refinement strategy, which is a topic of further research. On the other hand, one may conclude that even with this relatively poor resolution we obtain fairly good results, which indicates that our method has good robustness properties with respect to strong shape deformations.

\begin{figure}[ht!]
    \centering
    \includegraphics[width=0.4\textwidth]{./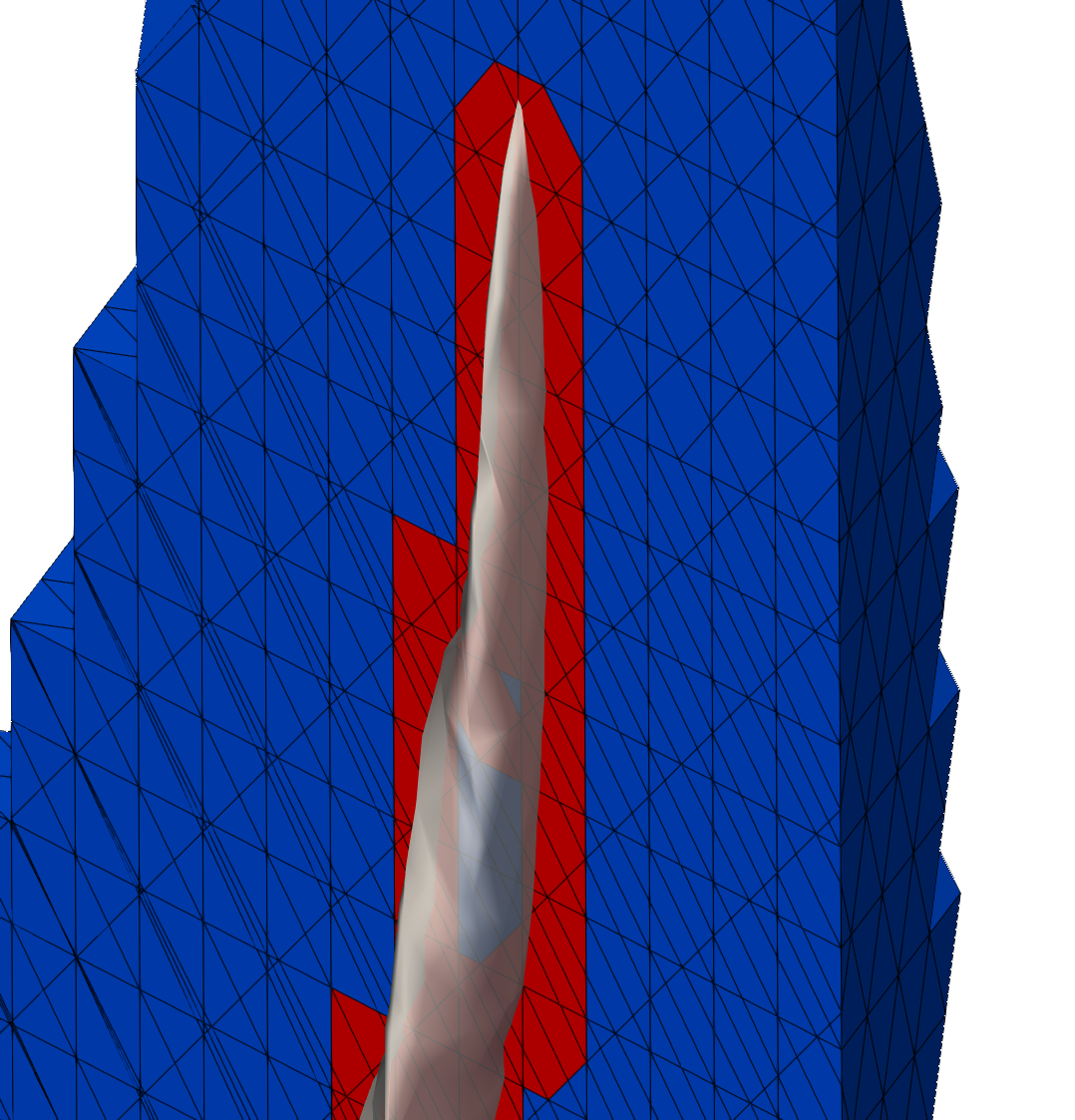}
    \caption{Zoom of the narrow tail of the deformed sphere, $h=2^{-6}=0.016$.} \label{fig_Zoom}
\end{figure}

\subsection{Conclusion}

A narrow-band finite element method with projection-based extension has demonstrated both effectiveness and reliability in capturing the evolution of an interface represented by the zero level of a level set function. The approach is flexible, as higher-order interface approximations can be readily achieved by selecting finite elements of higher degree and employing higher-order time integration techniques. For instance, using piecewise quadratic DG elements together with BDF3 for time integration demonstrated nearly optimal $O(h^3)$ accuracy in recovering the interface position. Interface curvatures and the normal field can be computed through straightforward elementwise calculations.

Exploiting variational properties of the projection, we have proven stability and accuracy estimates for the extension procedure. However, the error analysis of the complete method, which also involves the DG finite element formulation for the transport equation, remains an open problem for future research.

Numerical examples indicate that re-initialization of the level set function is not necessary to ensure the method's stability, even under rather large  deformations of the zero level set. In applications with (very) large deformations  or in problems where an accurate approximation of the distance to $\Gamma_h$ is required, we recommend post-processing $\phi_h^n$ with some standard re-distancing technique (e.g., a variant of fast marching) to obtain $\psi_h^n \simeq \text{dist}(\cdot, \Gamma_h^n)$. Using this post-processing only every $k$th time step, with $k$ ``large'',  avoids systematic errors that could arise from using the re-distancing of $\phi_h^n$ in the extension procedure in every time step.

In applications with large deformations the extension method that we propose  may benefit from  mesh adaptivity. This topic has not been studied, yet.

{\textbf{Acknowledgments:}} The authors A. Reusken and P. Schwering wish to thank the German Research Foundation (DFG) for financial support within the Research Unit ``Vector- and tensor valued surface PDEs'' (FOR 3013) with project no. RE 1461/11-2. The author M. Olshanskii was partially supported by National Science Foundation under Grant No. DMS-2408978.

This material is partially based upon work supported by the National Science Foundation under Grant No. DMS-1929284 while the authors M.O. and A.R. were in residence at the Institute for Computational and Experimental Research in Mathematics in Providence, RI, during the semester program.

\bibliographystyle{siam}
\bibliography{literature}{}
\end{document}